\documentclass[preprint,12pt]{elsarticle}

\usepackage{lineno,hyperref}
\modulolinenumbers[5]

\usepackage{amssymb}
\usepackage{algorithmic}
\usepackage{algorithm}
\usepackage{amssymb}
\usepackage{graphicx}
\usepackage{amsmath,amsfonts,amssymb}
\usepackage{epsfig,makecell,float}
\usepackage{setspace,mathrsfs}
\usepackage{tocloft}
\usepackage{textcomp}
\usepackage{multirow,indentfirst,times,color}
\usepackage[caption=false,font=footnotesize]{subfig}
\usepackage{booktabs}
\usepackage{threeparttable}
\usepackage{amsthm}
\usepackage{extarrows}
\usepackage{bm}

\renewcommand{\arraystretch}{1.5}
\usepackage[titletoc]{appendix}
\usepackage{epstopdf}
\usepackage{natbib}
\biboptions{numbers,sort&compress}
\newtheorem{thm}{Theorem}
\newtheorem{defn}{Definition}
\newtheorem{remark}{Remark}

\DeclareMathOperator*{\argmin}{arg\,min}
\DeclareMathOperator*{\argmax}{arg\,max}

\begin{document}

\begin{frontmatter}

\title{Graph Fractional Fourier Transform: A Unified and Efficient Sampling Theory}

\author{Yu Zhang$^{a,b}$}
\author{Jia-Yin Peng$^{a,b}$}
\author{Bing-Zhao Li$^{a,b}$\corref{mycorrespondingauthor}}
\cortext[mycorrespondingauthor]{Corresponding author}\ead{li\_bingzhao@bit.edu.cn}

\address{$^{a}$School of Mathematics and Statistics, Beijing Institute of Technology, Beijing 100081, China}
\address{$^{b}$Beijing Key Laboratory on MCAACI, Beijing Institute of Technology, Beijing 100081, China}


\begin{abstract}
The graph Fourier transform (GFT) is a fundamental tool in graph signal processing and has recently been extended to the graph fractional Fourier transform (GFRFT). Existing sampling methods in the GFRFT domain are primarily designed to minimize error, whereas a wider range of alternative sampling strategies should be admitted. In this paper, a unified and efficient GFRFT sampling theory is proposed. First, a new definition of graph fractional bandlimited signals is introduced, with the corresponding graph fractional sampling and perfect reconstruction theorem, as well as the associated graph fractional localization operator. Next, several GFRFT sampling strategies are developed based on different criteria, including maximum cutoff frequency, minimum error, and maximum localized basis, along with the corresponding representations of their localization operators. Then, by exploiting a localization operator that jointly considers vertex and spectral localization, a fast sampling set selection method in the GFRFT domain is proposed. Finally, numerical experiments investigate the reconstruction errors and execution time of the proposed sampling methods and evaluate their performance in applications, demonstrating the effectiveness of the unified GFRFT sampling theory and its advantages over GFT methods.
\end{abstract}

\begin{keyword} 
	Graph signal processing\sep graph fractional Fourier transform\sep sampling theory\sep sampling set selection\sep localization operator.
\end{keyword}

\end{frontmatter}

\section{Introduction}
\label{Intro}
\subsection{Motivation}
To process data defined on irregular structures, classical signal processing techniques have been extended to non-Euclidean graph topologies, giving rise to the framework of graph signal processing (GSP) \cite{Goverview}, which has been applied in transportation networks \cite{GFTlaplace,GFTadjacency,Ghistory}, social networks \cite{Social1,Social2}, biomedical analysis \cite{Biomedical}, and machine learning \cite{MachineLearning}. GSP generalizes many fundamental concepts and tools from classical signal processing to graphs, such as graph transforms \cite{Gtransform, GFRFT_unified,GLCT,Gvertex}, vertex-frequency analysis \cite{GLCT,Gvertex,Gfrequency}, sampling and interpolation \cite{GFTsampling,GFTuncertainty,GFTefficient,GFTSSS,GFTdualizing,GFTgenersamp,GFTinterpolation}, filtering \cite{GFTfilters}, and fast algorithms \cite{GFTfast}.

Two main approaches are commonly used in GSP. One is based on the graph Laplacian matrix and originates from spectral graph theory \cite{GFTlaplace}, while the other relies on the adjacency matrix and stems from algebraic signal processing, commonly referred to as discrete signal processing on graphs \cite{GFTadjacency}. These matrices are collectively known as graph shift operators. The core tool of GSP is the graph Fourier transform (GFT), which is defined via the eigendecomposition of the graph shift operator and represents graph signals in the eigenvector basis of the graph. The GFT can be regarded as a direct generalization of the discrete Fourier transform to graphs \cite{Gtransform}. For many real-world graph signals whose energy is concentrated in the low-frequency components of the GFT domain, a variety of signal processing tasks can be effectively performed.

Despite its well-established theoretical foundation, the GFT is inherently limited in its ability to characterize the transition from the vertex domain to the spectral domain and to process graph signals exhibiting chirp-like behaviors \cite{GFRFT,GFRFTspectral}. These limitations result in insufficient degrees of freedom and reduced flexibility. To address this issue, early studies \cite{GFRFT,GFRFTspectral, GFRFTdirected} innovatively extended the fractional Fourier transform \cite{FRFT1,FRFT2,DFRFT} to graphs, leading to the graph fractional Fourier transform (GFRFT).

\subsection{Related Work}
More recent GFRFT works have established rigorous definitions, including power-based formulations for arbitrary graphs and operator constructions based on hyper-differential equations \cite{GFRFT_unified}. These advances enable the efficient computation of the GFRFT and its inverse using Sylvester equation solvers and allow the fractional order to be treated as a differentiable and learnable parameter in data-driven models. As a result, the GFRFT has been successfully applied to vertex-frequency analysis \cite{GFRFT_unified,WGFRFT}, classification \cite{JFRFT,GFRFTsampling,GLCTsampling}, sampling \cite{GFRFTsampling,GLCTsampling,HGFRFT}, and filters \cite{HGFRFT,GFRFTfiltering,JFRFTfilter,JFRFTwiener}. Building on these developments, the GFRFT also provides a powerful foundation for designing more efficient sampling strategies in GSP. 

For large-scale or complex graph data \cite{Goverview,GFTlaplace,GFTadjacency,Ghistory}, signal acquisition and processing require substantial storage and computational resources. Sampling plays a critical role in reducing data redundancy while preserving essential signal characteristics \cite{Gsampling}, motivating the need for methods that can capture the most informative components of graph signals. Most existing sampling approaches focus on smooth or bandlimited signals, whose energy is concentrated on a subset of eigenvectors of the graph shift operator \cite{GFTsampling,GFTuncertainty,GFTefficient,GFTSSS,GFTdualizing,GFTgenersamp,GFTinterpolation}. As a result, sampling is typically formulated by selecting the graph spectral components associated with the largest GFT coefficients \cite{Gfrequency,GFTsampling,GFTuncertainty,GFTefficient,GFTSSS,GFTdualizing,GFTgenersamp}.

Previous studies have investigated optimal sampling strategies for signals with known frequency support \cite{GFTsampling}, as well as the relationship between signal localization on graphs and spectral spreading \cite{GFTuncertainty}. Noise-robust reconstruction methods and blue-noise sampling schemes have also been proposed \cite{GFTblue}. In addition, scalable and parallel sampling and reconstruction frameworks \cite{GFTParallel}, as well as non-Bayesian and variational Bayesian estimators based on the Cramér-Rao bound \cite{GFTNon-Bayesian,GFTBayes}, have been developed. Sampling and reconstruction of joint time-vertex signals were studied in \cite{JFTsampling,JFTdirectedsamp}, where most bandlimited sampling methods employ greedy algorithms to select optimal sampling sets.

\subsection{Contributions}
The sampling theory in the GFRFT domain was first introduced in \cite{GFRFTsampling}, where optimal sampling sets were obtained by minimizing error. Subsequently, generalized sampling methods exploiting GFRFT-based graph signal priors and sampling approaches based on the joint time-vertex fractional Fourier transform were proposed \cite{GFRFTgenersamp,JFRFTsampling}. However, existing GFRFT sampling methods rely on a single optimization criterion and do not explore a broader class of optimal design strategies. Moreover, the use of greedy and iterative procedures often leads to high computational complexity.

To address these limitations, this paper proposes a unified GFRFT sampling framework that incorporates multiple sampling strategies for signal reconstruction. Moreover, inspired by the pioneering approach in \cite{GFTSSS}, a fast sampling set selection method is integrated into the proposed framework to significantly reduce computational complexity and execution time. 
Extensive numerical simulations and real-world experiments demonstrate the superiority of the proposed methods in terms of reconstruction accuracy and computational efficiency, providing new insights into efficient GSP strategies.

The main contributions are summarized as follows:
\begin{itemize}
	\item A unified and efficient GFRFT sampling framework is developed, including graph fractional bandlimited signal, sampling and perfect reconstruction conditions, and localization theory.
	\item Several GFRFT sampling strategies are proposed under different optimization criteria, such as maximum cutoff frequency, minimum reconstruction error, and maximum localization, with their localization operators.
	\item A fast sampling set selection algorithm is introduced based on a localization operator that jointly captures vertex and spectral domain concentration.
	\item The effectiveness and computational efficiency of the proposed GFRFT sampling methods are validated through extensive experiments.
\end{itemize}
The remainder of this paper is organized as follows. 
Section~\ref{Preliminaries} reviews the background on GSP, the GFRFT, graph signal sampling, and localization operators. 
Section~\ref{GFRFTSampling} develops the theoretical framework of graph fractional sampling, including bandlimitedness, sampling, and recovery. 
Section~\ref{OptimalSampling} investigates optimal fractional sampling strategies based on different design criteria. 
Section~\ref{FastSampling} presents a fast GFRFT sampling set selection method. 
Section~\ref{Experiments} reports experimental results that demonstrate the advantages of the proposed methods in terms of reconstruction accuracy and computational efficiency. 
Finally, Section~\ref{Conclusion} concludes the paper.

\section{Background}
\label{Preliminaries}
In this section, we first briefly review some basic concepts in GSP including graph setting and GFRFT. Next, we introduce classical sampling theory and localization operator.

\subsection{Basic Concepts in GSP}
In GSP \cite{Goverview,GFTlaplace, GFTadjacency, Ghistory}, signals with high-dimensional topological structure can be represented by an undirected weighted graph $\mathcal{G} = (\mathcal{V}, \mathcal{E}, \mathbf{W})$. The graph consists of a finite set of vertices $\mathcal{V} = \{v_0, \ldots, v_{N-1}\}$, where $N = |\mathcal{V}|$ denotes the number of nodes. The set $\mathcal{E}$ represents the edges, and $\mathbf{W}$ is the weighted adjacency matrix with entries $w_{m,n}$. The unnormalized graph Laplacian is defined as $\mathbf{L} = \mathbf{D}^{\mathrm{deg}} - \mathbf{W}$, where $\mathbf{D}^{\mathrm{deg}} = \mathrm{diag}\big(d^{\mathrm{deg}}_0, \ldots, d^{\mathrm{deg}}_{N-1}\big)$ is the degree matrix whose $n$-th diagonal element is given by $k_n := \sum_{m=0}^{N-1} w_{m,n}$. Adjacency and Laplacian matrices are commonly used as symmetric graph shift operators $\mathbf{S}$, which admit the eigendecomposition
\begin{equation}
	\mathbf{S} = \mathbf{U}\boldsymbol{\Delta}\mathbf{U}^{\mathrm{H}},
	\label{shift}
\end{equation}
where $\bm{\Delta}=\mathrm{diag}(\delta_0,\ldots,\delta_{N-1})$ with $0\le\delta_0\le\cdots\le\delta_{N-1}=\delta_{\max}$, and $\mathbf{U}=[\bm{u}_0,\bm{u}_1,\ldots,\bm{u}_{N-1}]$ contains orthogonal eigenvectors.

A signal defined on the graph $\mathcal{G}$ is a mapping from the vertex set $\mathcal{V}$ to the complex field $\mathbb{C}$. The graph signal can be expressed as a vector $\bm{x} = [x_0, \ldots, x_{N-1}]^{\top} \in \mathbb{C}^{N}$. The GFT \cite{GFTadjacency} of $\bm{x}$ is defined as
\begin{equation}
	\hat{\bm{x}} = \mathbf{F}\bm{x} = \mathbf{U}^{\mathrm{H}} \bm{x}. \label{GFT}
\end{equation}
Projecting $\bm{x}$ onto the eigenspace of $\mathbf{S}$ exploits the graph topology, as these eigenvectors encode spectral clustering properties \cite{SpectralGraph}. Consequently, the GFT in \eqref{GFT} emphasizes signal components that are smooth within clusters and vary across clusters. The inverse GFT is given by $\bm{x} = \mathbf{U}\hat{\bm{x}}$, where $\hat{\bm{x}} = [\hat{x}_0, \ldots, \hat{x}_{N-1}]^{\top}$ denotes the frequency coefficients in the transform domain.

\begin{remark}
For directed graphs, the shift operator is generally non-symmetric, and a Jordan decomposition is required to diagonalize $\mathbf{S}$ \cite{Gtransform}. In contrast, when $\mathbf{S} = \mathbf{S}^{\mathrm{H}}$, the GFT is unitarily diagonalizable and reduces to $\mathbf{F} = \mathbf{U}^{-1} = \mathbf{U}^{\mathrm{H}}$.
\end{remark}

\subsection{Graph Fractional Fourier Transform}
The GFRFT is defined in analogy with the eigendecomposition formulation of the discrete fractional Fourier transform. By extending the definition in \cite{GFRFT_unified,GFRFT} to accommodate a graph shift operator $\mathbf{S}$, the $\alpha$-th order GFRFT of a graph signal $\bm{x}\in\mathbb{C}^N$ is given by
\begin{equation}
	\hat{\bm{x}} = \mathbf{F}^{\alpha}\bm{x}
	= \mathbf{Q}\mathbf{\Lambda}^{\alpha}\mathbf{Q}^{\mathrm{H}}\bm{x},
	\quad \alpha \in \mathbb{R}, \label{GFRFT}
\end{equation}
and the inverse GFRFT is
\begin{equation}
	\bm{x} = \mathbf{F}^{-\alpha}\hat{\bm{x}} = \mathbf{Q}\mathbf{\Lambda}^{-\alpha}\mathbf{Q}^{\mathrm{H}}\hat{\bm{x}}, \label{IGFRFT}
\end{equation}
where matrix $\mathbf{Q}$ and $\mathbf{\Lambda}$ are given by the spectral decomposition of GFT matrix 
\[
\mathbf{U}^{\mathrm{H}} = \mathbf{Q}\mathbf{\Lambda} \mathbf{Q}^{\mathrm{H}}.
\] 
The term $\mathbf{\Lambda}^{\alpha}$ is computed by applying the power a to every entry in the diagonal matrix $\mathbf{\Lambda}=\mathrm{diag}(\lambda_0, \lambda_1, \ldots, \lambda_{N-1})$. As shown in \cite{GFRFT}, the operator satisfies index additivity, reduces to the identity transform when $\alpha = 0$, and recovers the conventional GFT when $\alpha = 1$.

\subsection{Sampling Theory of Graph Signals}
Sampling is a fundamental problem in GSP \cite{Gsampling}. The objective is to determine conditions under which a bandlimited or approximately bandlimited graph signal can be recovered from a subset of its samples, and to design effective sampling and reconstruction strategies \cite{GFTsampling,GFTuncertainty,GFTefficient}. A bandlimited graph signal can be written as
\begin{equation}
	\bm{x} = \mathbf{U}\bm{s},
\end{equation}
where $\mathbf{U}$ is defined in Eq.~\eqref{shift}, and $\bm{s}$ denotes the sparse graph spectral coefficient vector. Given a vertex subset $\mathcal{S} \subseteq  \mathcal{V}$, the vertex-limiting operator is defined as the diagonal matrix
\begin{equation}
	\mathbf{D}_{\mathcal{S}} = \mathrm{diag}(\bm{1}_{\mathcal{S}}), \label{DS}
\end{equation}
where the indicator vector $\bm{1}_{\mathcal{S}}$ has its $i$-th entry equal to $1$ if $i \in \mathcal{S}$ and $0$ otherwise. For a sampling set $\mathcal{S}$, the sampled signal is
\begin{equation}
	\bm{x}_{\mathcal{S}} 
	= \mathbf{D}_{\mathcal{S}}\bm{x}
	= \mathbf{D}_{\mathcal{S}}\mathbf{U}\bm{s}.
	\label{sampledsignal}
\end{equation}
Recovering $\bm{x}$ from its samples amounts to solving \eqref{sampledsignal} by exploiting the sparsity of $\bm{s}$, for which both iterative and non-iterative reconstruction methods have been studied \cite{GFTsampling,GFTreconstruction}.

It is important to note that the choice of the sampling set $\mathcal{S}$ plays a critical role, because it directly influences the conditioning of~\eqref{sampledsignal}. Therefore, designing strategies for optimizing the sampling set is essential for reliable reconstruction.

\subsection{Localization Operators}
The filtering operation in the graph spectral domain is given by
\begin{equation}
	\bm{x} = \mathbf{U}\, h(\mathbf{\Delta})\, \mathbf{U}^{\mathrm{H}} \bm{y}, \label{x=y}
\end{equation}
where $\mathbf{\Delta}$ is defined in Eq.~\eqref{shift} with spectral kernel $h(\delta_i)$, and $\bm{y}$ and $\bm{x}$ are the input and output signals, respectively. 

The $n$-th component of the localization operator centered at vertex $i$ is defined in \cite{Gglobal} as
\begin{equation}
	T_{h,i}(n)
	= \sqrt{N}\sum_{\ell=0}^{N-1}
	h(\delta_{\ell})\, u_{\ell}^{\mathrm{H}}(i)\, u_{\ell}(n). \label{Thi}
\end{equation}
Stacking the localization operators as rows gives
\begin{equation}
	\mathbf{T}
	= \big[\, \bm{T}_{h,0}\; \bm{T}_{h,1}\; \cdots\; \bm{T}_{h,N-1} \big]
	= \sqrt{N}\, \mathbf{U} h(\bm{\Delta}) \mathbf{U}^{\mathrm{H}}.  \label{T}
\end{equation}

\section{Graph Fractional Sampling Theory}
\label{GFRFTSampling}
This section builds on the sampling framework in \cite{GFTsampling,GFRFTsampling} and develops an efficient and unified GFRFT-based sampling theory. The proposed approach exploits orthogonal bases and spectral-domain sampling to reconstruct graph bandlimited signals from undersampled observations. 

\subsection{Graph Fractional Bandlimited Signals}
\label{sec3.1}
Graph fractional bandlimited signals have been characterized through several complementary perspectives in prior studies \cite{GFRFTsampling}. Here, we provide a unified formulation by combining vertex bandlimiting operators with fractional spectral bandlimiting operators, thereby establishing a coherent framework for fractional bandlimitation and sampling.

We begin with the vertex domain. For a vertex subset $\mathcal{S} \subseteq \mathcal{V}$, the vertex limiting operator $\mathbf{D}_{\mathcal{S}}$ is defined in Eq.~\eqref{DS}. This operator is a square matrix whose function is to zero out all entries associated with vertices outside the subset $\mathcal{S}$.

In the dual domain, given the GFRFT matrix $\mathbf{F}^{\alpha}$ (see Eq.~\eqref{GFRFT}) and a frequency-index subset $\mathcal{F} \subseteq \hat{\mathcal{G}} = \{1,\dots,N\}$, the spectral limiting operator is defined as
\begin{equation}
	\mathbf{B}_{\mathcal{F}}^{\alpha}
	= \mathbf{F}^{-\alpha} \bm{\Sigma}_{\mathcal{F}} \mathbf{F}^{\alpha},
\end{equation}
where $\bm{\Sigma}_{\mathcal{F}} = \mathrm{diag}(\mathbf{1}_{\mathcal{F}})$, and $\mathbf{1}_{\mathcal{F}}$ denotes the indicator vector of $\mathcal{F}$, analogously to $\mathbf{1}_{\mathcal{S}}$.
The operator $\mathbf{B}_{\mathcal{F}}^{\alpha}$ projects a graph signal onto the subspace spanned by the columns of $\mathbf{F}^{\alpha}$ indexed by $\mathcal{F}$. Both $\mathbf{D}_{\mathcal{S}}$ and $\mathbf{B}_{\mathcal{F}}^{\alpha}$ are Hermitian and idempotent, and hence act as orthogonal projection operators.

For notational simplicity, we omit the subscripts associated with the sets. Let $\mathcal{D}$ denote the set of vertex-limited signals satisfying $\mathbf{D}\bm{x} = \bm{x}$, and let $\mathcal{B}^{\alpha}$ denote the set of fractional bandlimited signals satisfying $\mathbf{B}^{\alpha}\bm{x} = \bm{x}$. The complement of $\mathcal{S}$, denoted $\mathcal{S}^c$, satisfies $\mathcal{V} = \mathcal{S} \cup \mathcal{S}^c$, with associated projection $\overline{\mathbf{D}} = \mathbf{I} - \mathbf{D}$. Similarly, the complement of $\mathcal{F}$ induces the projection $\overline{\mathbf{B}}^{\alpha} =   \mathbf{F}^{-\alpha}(\mathbf{I}-\bm{\Sigma})\mathbf{F}^{\alpha} = \mathbf{I} - \mathbf{B}^{\alpha}$. These definitions lead to the following fundamental characterization of graph fractional bandlimited signals.

\begin{thm}
	A graph signal $\bm{x} \in \mathbb{C}^{N}$ is perfectly localized, meaning that it is simultaneously bandlimited over the pair $(\mathcal{S},\mathcal{F})$ so that $\bm{x} \in \mathcal{D} \cap \mathcal{B}^{\alpha}$, if and only if  \label{thm1}
	\begin{equation}
		\|\mathbf{B}^{\alpha}\mathbf{D}\|_2
		=
		\|\mathbf{D}\mathbf{B}^{\alpha}\|_2
		= 1.
		\label{eq:norm1}
	\end{equation}
\end{thm}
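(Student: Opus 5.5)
The proof uses only the fact, stated just before the theorem, that $\mathbf{D}$ and $\mathbf{B}^{\alpha}$ are orthogonal projections (Hermitian and idempotent). This holds when $\mathbf{F}^{\alpha}$ is unitary, since then $\mathbf{F}^{-\alpha}=(\mathbf{F}^{\alpha})^{\mathrm{H}}$ and $\mathbf{B}^{\alpha}=(\mathbf{F}^{\alpha})^{\mathrm{H}}\bm{\Sigma}\mathbf{F}^{\alpha}$; for a unitary GFT matrix $\mathbf{U}^{\mathrm{H}}$ the eigenvalues $\lambda_i$ have unit modulus, so $\mathbf{\Lambda}^{\alpha}$ is unitary as well. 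We read the statement as saying that a nonzero $\bm{x}$ lies in $\mathcal{D}\cap\mathcal{B}^{\alpha}$ if and only if \eqref{eq:norm1} holds, i.e.\ $\mathcal{D}\cap\mathcal{B}^{\alpha}\neq\{\bm{0}\}$ if and only if the norms equal one.

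\emph{Step 1 (equality of the two norms and the bound).} Since $(\mathbf{B}^{\alpha}\mathbf{D})^{\mathrm{H}}=\mathbf{D}^{\mathrm{H}}(\mathbf{B}^{\alpha})^{\mathrm{H}}=\mathbf{D}\mathbf{B}^{\alpha}$, and the spectral norm is invariant under conjugate transposition, $\|\mathbf{B}^{\alpha}\mathbf{D}\|_2=\|\mathbf{D}\mathbf{B}^{\alpha}\|_2$. Orthogonal projections have norm at most one, so submultiplicativity gives $\|\mathbf{B}^{\alpha}\mathbf{D}\|_2\le 1$.

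\emph{Step 2 (necessity).} Suppose $\bm{x}\neq\bm{0}$ satisfies $\mathbf{D}\bm{x}=\bm{x}$ and $\mathbf{B}^{\alpha}\bm{x}=\bm{x}$. Then $\mathbf{B}^{\alpha}\mathbf{D}\bm{x}=\bm{x}$, so $\|\mathbf{B}^{\alpha}\mathbf{D}\|_2\ge \|\bm{x}\|_2/\|\bm{x}\|_2=1$. Combined with Step 1, this gives equality.

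\emph{Step 3 (sufficiency).} This is the main point: we must show that the norm one is attained by a simultaneously localized vector. In finite dimensions the supremum defining $\|\mathbf{B}^{\alpha}\mathbf{D}\|_2$ is attained by compactness of the unit sphere, or equivalently by a top singular vector. So there is a unit vector $\bm{x}$ with $\|\mathbf{B}^{\alpha}\mathbf{D}\bm{x}\|_2=1$. We then have
\begin{equation*}
1=\|\mathbf{B}^{\alpha}\mathbf{D}\bm{x}\|_2\le\|\mathbf{D}\bm{x}\|_2\le\|\bm{x}\|_2=1,
\end{equation*}
so both inequalities are equalities. For an orthogonal projection $\mathbf{P}$, Pythagoras gives $\|\bm{y}\|_2^2=\|\mathbf{P}\bm{y}\|_2^2+\|(\mathbf{I}-\mathbf{P})\bm{y}\|_2^2$. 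Hence $\|\mathbf{P}\bm{y}\|_2=\|\bm{y}\|_2$ forces $\mathbf{P}\bm{y}=\bm{y}$. Applying this first with $\mathbf{P}=\mathbf{D}$ gives $\mathbf{D}\bm{x}=\bm{x}$. Applying it next with $\mathbf{P}=\mathbf{B}^{\alpha}$ and $\bm{y}=\mathbf{D}\bm{x}=\bm{x}$ gives $\mathbf{B}^{\alpha}\bm{x}=\bm{x}$. Thus $\bm{x}\in\mathcal{D}\cap\mathcal{B}^{\alpha}$ and $\bm{x}\neq\bm{0}$.

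The only delicate points are two. First, the projection property relies on unitarity of $\mathbf{F}^{\alpha}$, which we take from the paper's preceding claim. Second, the extremal vector must exist, which is immediate in $\mathbb{C}^N$. Step 3 also shows that the localized signals are exactly the top singular vectors of $\mathbf{B}^{\alpha}\mathbf{D}$ with singular value one.
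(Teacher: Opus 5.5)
Your proof is correct and follows essentially the same route as the paper's. Necessity uses the fixed-point relation $\mathbf{B}^{\alpha}\mathbf{D}\bm{x}=\bm{x}$, the bound $\|\mathbf{B}^{\alpha}\mathbf{D}\|_2\le 1$ and the adjoint identity $(\mathbf{B}^{\alpha}\mathbf{D})^{\mathrm{H}}=\mathbf{D}\mathbf{B}^{\alpha}$. Sufficiency uses a norm-attaining vector at which equality forces invariance under both projections. Your chain $1=\|\mathbf{B}^{\alpha}\mathbf{D}\bm{x}\|_2\le\|\mathbf{D}\bm{x}\|_2\le\|\bm{x}\|_2$, combined with the Pythagorean argument, makes rigorous the steps the paper only asserts (``equality for orthogonal projectors holds only if\ldots'' and ``substituting back gives $\mathbf{D}\bm{x}=\bm{x}$''). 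Your reading of the statement as $\mathcal{D}\cap\mathcal{B}^{\alpha}\neq\{\bm{0}\}$ is exactly what the paper's proof establishes.
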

\begin{proof}
	The proof is provided in Appendix~\ref{AA}.
\end{proof}

Moreover, the vector $\bm{x}$ corresponds to an eigenvector associated with the unit eigenvalue. It is also worth noting that when $\alpha = 1$, the fractional bandlimited theorem reduces to the classical graph bandlimited case.

\subsection{Graph Fractional Sampling and Perfect Recovery}
\label{sec3.2}
Given a graph signal $\bm{x} \in \mathcal{B}^{\alpha}$ defined on the vertex set, the sampled signal $\bm{x}_{\mathcal{S}} \in \mathcal{D}$, obtained by retaining the entries of $\bm{x}$ on the sampling subset $\mathcal{S} \subseteq \mathcal{V}$ and setting all others to zero, as in Eq.~\eqref{sampledsignal}. A reconstruction operator $\mathbf{R}$ is then used to recover the original signal from its samples,
\begin{equation}
	\bm{x}_{\mathcal{R}} = \mathbf{R}\bm{x}_{\mathcal{S}}
	= \mathbf{R}\mathbf{D}\bm{x} \in \mathbb{C}^{N}, \label{xR=RDx}
\end{equation}
where $\bm{x}_{\mathcal{R}}$ denotes the exact or approximate reconstruction, as illustrated in Fig.~\ref{fig01}.
\begin{figure}[t]
	\begin{center}
		\begin{minipage}[t]{0.4\linewidth}
			\centering
			\includegraphics[width=\linewidth]{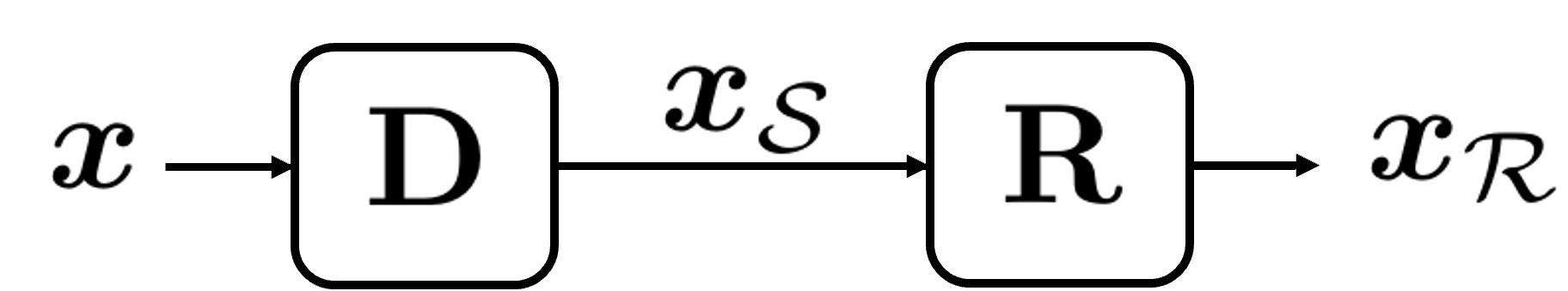}
		\end{minipage}
	\end{center}
	\caption{Sampling and reconstruction of a graph signal.}
	\label{fig01}
\end{figure}

It is immediate that perfect recovery is achieved for all signals if and only if $\mathbf{RD}$ equals the identity matrix. However, this is generally impossible because
\[
\mathrm{rank}(\mathbf{RD}) \leq \mathrm{rank}(\mathbf{D}) = |\mathcal{S}| < N.
\]
Nevertheless, perfect reconstruction is achievable for certain classes of signals with specific structural properties.

In particular, if a graph signal satisfies Theorem~\ref{thm1}, then perfect recovery from its samples is possible. Such signals are known as graph fractional bandlimited signals. An equivalent characterization is that a signal is graph fractional bandlimited if its GFRFT coefficients satisfy $\hat{\bm{x}}_k = 0$ for all $k \geq |\mathcal{F}|$. The smallest such index $|\mathcal{F}|$ is termed the bandwidth of $\bm{x}$. 

Following~\cite{GFRFTsampling}, the bandwidth in the GFRFT domain is defined as the number of nonzero coefficients in the fractional Fourier representation, reflecting the support of signal in the fractional spectral domain.

\begin{defn}
A signal $\bm{x} \in \mathbb{C}^{N}$ is said to be bandwidth-limited if the number of nonzero entries in its graph fractional spectrum $\hat{\bm{x}}$ is less than $|\mathcal{F}| < N$.
\end{defn}

Let $\mathcal{S}$ be an appropriate sampling subset. There exists a sampling set of size $|\mathcal{S}| \geq |\mathcal{F}|$ such that $\bm{x}$ can be perfectly recovered from its samples. The key issue is to determine the condition for perfect recovery from $\bm{x}_{\mathcal{S}}$. The following theorem states the result.

\begin{thm}
Given the samples $\bm{x}_{\mathcal{S}} = \mathbf{D}\bm{x}$, any $\bm{x} \in \mathcal{B}^{\alpha}$ can be perfectly recovered from its samples if and only if \label{thm2}
\[
\| \mathbf{B}^{\alpha}\overline{\mathbf{D}} \|_2 < 1,
\]
which means the operator $\mathbf{B}^{\alpha}\overline{\mathbf{D}}$ has no eigenvector that is fully supported on $\mathcal{S}^c$ while remaining graph fractional bandlimited on $\mathcal{F}$.
\end{thm}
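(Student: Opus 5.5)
The plan is to reduce perfect recovery to injectivity of the sampling map $\mathbf{D}$ restricted to $\mathcal{B}^{\alpha}$, and then to translate injectivity into the norm condition. I will use the fact, stated in Section~\ref{sec3.1}, that $\mathbf{B}^{\alpha}$ and $\overline{\mathbf{D}}=\mathbf{I}-\mathbf{D}$ are orthogonal projections. This holds because $\mathbf{F}^{\alpha}=\mathbf{Q}\mathbf{\Lambda}^{\alpha}\mathbf{Q}^{\mathrm{H}}$ is unitary for unitary $\mathbf{U}^{\mathrm{H}}$, since $|\lambda_i|=1$. Consequently $\|\mathbf{B}^{\alpha}\overline{\mathbf{D}}\|_2\le 1$ always holds.

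\emph{Step 1 (recovery $\Leftrightarrow$ trivial intersection).} A reconstruction operator $\mathbf{R}$ with $\mathbf{R}\mathbf{D}\bm{x}=\bm{x}$ for all $\bm{x}\in\mathcal{B}^{\alpha}$ exists if and only if $\mathbf{D}$ is injective on $\mathcal{B}^{\alpha}$.

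For necessity: if $\bm{x}\in\mathcal{B}^{\alpha}$, $\bm{x}\neq 0$ and $\mathbf{D}\bm{x}=0$, then $\bm{x}$ and $0$ have identical samples, so recovery fails.

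For sufficiency: if $\mathbf{D}$ is injective on $\mathcal{B}^{\alpha}$, then $\mathbf{D}\mathbf{B}^{\alpha}$, viewed as a map from $\mathcal{B}^{\alpha}$ onto its range, is invertible. An explicit choice is $\mathbf{R}=\mathbf{B}^{\alpha}(\mathbf{B}^{\alpha}\mathbf{D}\mathbf{B}^{\alpha})^{\dagger}\mathbf{B}^{\alpha}$, or equivalently $(\mathbf{I}-\mathbf{B}^{\alpha}\overline{\mathbf{D}})^{-1}\mathbf{B}^{\alpha}$ once Step 2 is available. Since $\mathbf{D}\bm{x}=0$ with $\bm{x}\in\mathcal{B}^{\alpha}$ means exactly $\overline{\mathbf{D}}\bm{x}=\bm{x}$, injectivity is equivalent to $\mathcal{B}^{\alpha}\cap\overline{\mathcal{D}}=\{0\}$. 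Here $\overline{\mathcal{D}}$ denotes the signals supported on $\mathcal{S}^c$.

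\emph{Step 2 (trivial intersection $\Leftrightarrow$ norm $<1$).} This is Theorem~\ref{thm1} applied to the pair $(\mathcal{S}^c,\mathcal{F})$, with $\overline{\mathbf{D}}$ in place of $\mathbf{D}$. That theorem says a nonzero signal lies in $\overline{\mathcal{D}}\cap\mathcal{B}^{\alpha}$ if and only if $\|\mathbf{B}^{\alpha}\overline{\mathbf{D}}\|_2=1$. Combined with the a priori bound $\le 1$, a trivial intersection is equivalent to $\|\mathbf{B}^{\alpha}\overline{\mathbf{D}}\|_2<1$.

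To be self-contained I would also sketch the direct argument. In finite dimensions the norm is attained: there is a unit $\bm{y}$ with $\|\mathbf{B}^{\alpha}\overline{\mathbf{D}}\bm{y}\|=1$. The chain $1=\|\mathbf{B}^{\alpha}\overline{\mathbf{D}}\bm{y}\|\le\|\overline{\mathbf{D}}\bm{y}\|\le\|\bm{y}\|=1$ then forces equality in both projection inequalities. Equality in $\|\overline{\mathbf{D}}\bm{y}\|\le\|\bm{y}\|$ gives $\overline{\mathbf{D}}\bm{y}=\bm{y}$, and equality in $\|\mathbf{B}^{\alpha}\bm{y}\|\le\|\bm{y}\|$ gives $\mathbf{B}^{\alpha}\bm{y}=\bm{y}$. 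Hence $\bm{y}\in\overline{\mathcal{D}}\cap\mathcal{B}^{\alpha}$. The same vector satisfies $\mathbf{B}^{\alpha}\overline{\mathbf{D}}\bm{y}=\bm{y}$, so it is an eigenvector with unit eigenvalue, supported on $\mathcal{S}^c$ and bandlimited on $\mathcal{F}$. This yields the eigenvector interpretation in the statement. Conversely, any such $\bm{y}$ gives norm $1$.

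The main obstacle is the sufficiency direction of Step 1. There I must exhibit $\mathbf{R}$ and verify $\mathbf{R}\mathbf{D}\bm{x}=\bm{x}$.

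I would use the Neumann-series form. For $\bm{x}\in\mathcal{B}^{\alpha}$ we have $\bm{x}=\mathbf{B}^{\alpha}\bm{x}=\mathbf{B}^{\alpha}\mathbf{D}\bm{x}+\mathbf{B}^{\alpha}\overline{\mathbf{D}}\bm{x}$. Rearranging gives $(\mathbf{I}-\mathbf{B}^{\alpha}\overline{\mathbf{D}})\bm{x}=\mathbf{B}^{\alpha}\bm{x}_{\mathcal{S}}$. Since $\|\mathbf{B}^{\alpha}\overline{\mathbf{D}}\|_2<1$, the operator $\mathbf{I}-\mathbf{B}^{\alpha}\overline{\mathbf{D}}$ is invertible, and therefore $\bm{x}=(\mathbf{I}-\mathbf{B}^{\alpha}\overline{\mathbf{D}})^{-1}\mathbf{B}^{\alpha}\bm{x}_{\mathcal{S}}$. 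This gives sufficiency directly from the norm condition and avoids pseudoinverses.
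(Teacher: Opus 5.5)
Your proof is correct and follows the paper's argument. For sufficiency, the paper also uses Neumann-series invertibility. Your operator $(\mathbf{I}-\mathbf{B}^{\alpha}\overline{\mathbf{D}})^{-1}\mathbf{B}^{\alpha}$ is the same matrix as the paper's $\mathbf{B}^{\alpha}(\mathbf{I}-\overline{\mathbf{D}}\mathbf{B}^{\alpha})^{-1}$ by the push-through identity. For necessity, the paper likewise produces a nonzero bandlimited signal supported on $\mathcal{S}^c$ whose samples vanish.

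Your equality-forcing step, where equality in $\|\mathbf{B}^{\alpha}\overline{\mathbf{D}}\bm{y}\|\le\|\overline{\mathbf{D}}\bm{y}\|\le\|\bm{y}\|$ gives $\overline{\mathbf{D}}\bm{y}=\bm{y}=\mathbf{B}^{\alpha}\bm{y}$, is more careful than the paper's converse. The paper simply asserts $\mathbf{B}^{\alpha}\overline{\mathbf{D}}\bm{w}=\bm{w}$ for $\bm{w}=\mathbf{B}^{\alpha}\bm{v}$ without justification.
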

\begin{proof}
	The proof is provided in Appendix~\ref{AB}.
\end{proof}

Based on Theorem~\ref{thm2} and Eq.~\eqref{xR=RDx}, we obtain a reconstruction scheme for recovering the original signal from its samples. For a sampling operator $\mathbf{D}$, the reconstruction operator $\mathbf{R}$ spans the $|\mathcal{F}|$-dimensional fractional bandlimited subspace induced by $\mathbf{F}^{\alpha}$, consistent with $\bm{x}\in\mathcal{B}^{\alpha}$ being graph fractional bandlimited. If $\mathbf{R}\mathbf{D}$ acts as a projection operator, then every $\bm{x}\in\mathcal{B}^{\alpha}$ satisfies $\bm{x}=\mathbf{R}\mathbf{D}\bm{x}
=\mathbf{R}(\mathbf{I}-\overline{\mathbf{D}}\mathbf{B}^{\alpha})\bm{x}$, which leads to the reconstruction formula
\[
\bm{x}_{\mathcal{R}}
=\mathbf{R}\bm{x}_{\mathcal{S}}
=(\mathbf{I}-\overline{\mathbf{D}}\mathbf{B}^{\alpha})^{-1}\bm{x}_{\mathcal{S}}.
\]

Before introducing a specific reconstruction operator, observe that for any $\bm{x}\in\mathcal{B}^{\alpha}$,
\[
\left(\mathbf{I}-\overline{\mathbf{D}}\mathbf{B}^{\alpha}\right)\bm{x}
=(\mathbf{I}-\overline{\mathbf{D}})\bm{x}
=\mathbf{D}\bm{x}
=\mathbf{D}\mathbf{B}^{\alpha}\bm{x}.
\]
Thus, $\mathbf{I}-\overline{\mathbf{D}}\mathbf{B}^{\alpha}$ is equivalent to $\mathbf{D}\mathbf{B}^{\alpha}$ when acting on bandlimited signals, and consequently $\mathbf{R}=(\mathbf{D}\mathbf{B}^{\alpha})^{-1}$. 
The operator $\mathbf{D}\mathbf{B}^{\alpha}$ is invertible if and only if
\[
\mathrm{rank}(\mathbf{D}\mathbf{B}^{\alpha})=\mathrm{rank}(\mathbf{B}^{\alpha}),
\]
which is precisely the condition in Theorem~\ref{thm2}. In this case, the singular vectors associated with the nonzero singular values of $\mathbf{D}\mathbf{B}^{\alpha}$ form a basis for the range of $\mathbf{B}^{\alpha}$. For the general possibly rank-deficient case, we employ the pseudo-inverse and define $\mathbf{R} = (\mathbf{D}\mathbf{B}^{\alpha})^{\dagger}$.

Therefore, the reconstruction operator can be expressed as
\begin{equation}
	\mathbf{R}
	=(\mathbf{D}\mathbf{B}^{\alpha})^{\dagger}
	=(\mathbf{D}\mathbf{F}^{-\alpha}\mathbf{\Sigma}\mathbf{F}^{\alpha})^{\dagger}
	=\mathbf{F}^{-\alpha}_{\mathcal{V}\mathcal{F}}
	\left(\mathbf{F}^{-\alpha}_{\mathcal{S}\mathcal{F}}\right)^{\dagger},
	\label{R}
\end{equation}
where $\mathbf{F}^{-\alpha}_{\mathcal{V}\mathcal{F}}$ denotes the submatrix of $\mathbf{F}^{-\alpha}$ containing all $N$ rows and the columns indexed by the frequency support $\mathcal{F}$, while $\mathbf{F}^{-\alpha}_{\mathcal{S}\mathcal{F}}$ selects the rows indexed by $\mathcal{S}$ and the same set of columns $\mathcal{F}$.

\subsection{Graph Fractional Localization Reconstruction and Error}
This part builds upon the localized operator in Eq.~\eqref{T} and develops a graph fractional localization framework that unifies sampling, reconstruction, and error characterization under a localized formulation.

Consider a polynomial graph filter defined in the vertex domain, where $h(\delta_i)$ denotes the spectral kernel and $\delta_i$ is the $i$-th eigenvalue of the graph shift operator $\mathbf{S}$. We can define the graph fractional shift operator
\begin{equation}
	\mathbf{S}^{\alpha} := \mathbf{F}^{-\alpha} \mathbf{\Delta}^{\alpha}\, \mathbf{F}^{\alpha}. \label{Salpha}
\end{equation}
Similar to Eq.~\eqref{x=y}, the GFRFT-based filtering process is expressed as
\[
\bm{x} = h(\mathbf{S}^{\alpha})\bm{y} := \mathbf{F}^{-\alpha} h(\mathbf{\Delta}^{\alpha}) \mathbf{F}^{\alpha} \bm{y},
\]
where $\mathbf{\Delta}^{\alpha} = \mathrm{diag}(\delta^{\alpha}_0, \dots, \delta^{\alpha}_{N-1})$ collects the spectral components in the vertex domain. Following \cite{Gglobal} and Eq.~\eqref{Thi}, the fractional localization operator is given by $T^{\alpha}_{h,i}(n) = \sum_{\ell=0}^{N-1} 
h(\delta^{\alpha}_\ell)\, (F^{\alpha}_\ell)^{\mathrm{H}}(i)\, F^{\alpha}_\ell(n)$, and $h(\delta^{\alpha}_\ell)$ typically represents a low-pass response, and $\bm{F}^{\alpha}_\ell = \left[ F^{\alpha}_{\ell}(0), \dots, F^{\alpha}_{\ell}(N-1)\right] $ denotes the $\ell$-th column of the fractional graph Fourier matrix $\mathbf{F}^{\alpha}$. The matrix representation of the localized operator is
\begin{equation}
	\mathbf{T}^{\alpha} = \big[\, \bm{T}^{\alpha}_{h,0}\; \bm{T}^{\alpha}_{h,1}\; \cdots\; \bm{T}^{\alpha}_{h,N-1} \big]:=
	\mathbf{F}^{-\alpha} h(\mathbf{\Delta}^{\alpha}) \mathbf{F}^{\alpha},  \label{Talpha}  
\end{equation}
here the selection of filter $h$ requires that $\mathbf{T}^{\alpha}$ be Hermitian. For the fractional localization operator, signal reconstruction has an equivalent alternative \cite{GFTSSS}, achieved through a weighted linear combination of the columns of $\mathbf{T}^{\alpha}$, as formalized below.

\begin{thm}
Let $\bm{x}\in\mathcal{B}^{\alpha}$ be a graph fractional bandlimited signal with bandwidth $|\mathcal{F}|$. If $|\mathcal{S}| \ge |\mathcal{F}|$ and the samples $\bm{x}_{\mathcal{S}}$ collected on $\mathcal{S}$ permit perfect recovery, then $\bm{x}$ can be exactly reconstructed by \label{thm3}
\begin{equation}
	\bm{x}_{\mathcal{R}}
	= \mathbf{T}^{\alpha}_{\mathcal{V}\mathcal{S}}
	\left( \mathbf{T}^{\alpha}_{\mathcal{S}} \right)^{\dag}
	\bm{x}_{\mathcal{S}},
	\label{xR=TTxS}
\end{equation}
where $\mathcal{V}$ denotes the full vertex index set and $\mathcal{S}\subseteq\mathcal{V}$ the sampling set.
\end{thm}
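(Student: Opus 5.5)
The plan is to factor $\mathbf{T}^{\alpha}$ through the fractional bandlimited subspace and then use the full-rank condition that Theorem~\ref{thm2} supplies. Two interpretive points come first.

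\emph{Unitarity.} Since $\mathbf{U}^{\mathrm{H}}$ is unitary, its eigenvalues satisfy $|\lambda_k|=1$. Hence $\mathbf{F}^{\alpha}=\mathbf{Q}\mathbf{\Lambda}^{\alpha}\mathbf{Q}^{\mathrm{H}}$ is unitary and $\mathbf{F}^{-\alpha}=(\mathbf{F}^{\alpha})^{\mathrm{H}}$.

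\emph{Assumption on $h$.} The statement is only true if the columns of $\mathbf{T}^{\alpha}$ span $\mathcal{B}^{\alpha}$. I would therefore make explicit that the kernel $h$ is supported exactly on $\mathcal{F}$: $h(\delta^{\alpha}_\ell)\neq 0$ for $\ell\in\mathcal{F}$ and $h(\delta^{\alpha}_\ell)=0$ otherwise. This is the ideal low-pass case implicit in the text. Hermitian symmetry of $\mathbf{T}^{\alpha}$ gives real $h$ values.

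\emph{Notation.} Let $\mathbf{V}=\mathbf{F}^{-\alpha}_{\mathcal{V}\mathcal{F}}$, which has orthonormal columns. Let $\mathbf{V}_{\mathcal{S}}=\mathbf{F}^{-\alpha}_{\mathcal{S}\mathcal{F}}$ be its rows indexed by $\mathcal{S}$. Let $\mathbf{H}=\mathrm{diag}(h(\delta^{\alpha}_\ell))_{\ell\in\mathcal{F}}$, which is invertible.

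\emph{Factorizations.} From Eq.~\eqref{Talpha},
\[
\mathbf{T}^{\alpha}=\mathbf{V}\mathbf{H}\mathbf{V}^{\mathrm{H}},\qquad
\mathbf{T}^{\alpha}_{\mathcal{V}\mathcal{S}}=\mathbf{V}\mathbf{H}\mathbf{V}_{\mathcal{S}}^{\mathrm{H}},\qquad
\mathbf{T}^{\alpha}_{\mathcal{S}}=\mathbf{V}_{\mathcal{S}}\mathbf{H}\mathbf{V}_{\mathcal{S}}^{\mathrm{H}}.
\]
Any $\bm{x}\in\mathcal{B}^{\alpha}$ can be written $\bm{x}=\mathbf{V}\bm{c}$ for some $\bm{c}\in\mathbb{C}^{|\mathcal{F}|}$. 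I read $\bm{x}_{\mathcal{S}}$ in \eqref{xR=TTxS} as the $|\mathcal{S}|$-vector of retained samples, so $\bm{x}_{\mathcal{S}}=\mathbf{V}_{\mathcal{S}}\bm{c}$.

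\emph{Full column rank.} The hypothesis that the samples permit perfect recovery is, by Theorem~\ref{thm2} and the rank discussion after it, $\mathrm{rank}(\mathbf{D}\mathbf{B}^{\alpha})=\mathrm{rank}(\mathbf{B}^{\alpha})=|\mathcal{F}|$. Equivalently, $\mathbf{V}_{\mathcal{S}}$ has full column rank $|\mathcal{F}|$, which is possible because $|\mathcal{S}|\ge|\mathcal{F}|$. Otherwise a nonzero $\bm{c}$ with $\mathbf{V}_{\mathcal{S}}\bm{c}=0$ would give a bandlimited signal supported on $\mathcal{S}^c$.

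\emph{Pseudo-inverse (main obstacle).} This is the step I expect to be hardest. The reverse-order law $(\mathbf{A}\mathbf{B})^{\dagger}=\mathbf{B}^{\dagger}\mathbf{A}^{\dagger}$ fails in general, so I will invoke it only in the full-rank-factorization case, where it holds. Set $\mathbf{A}=\mathbf{V}_{\mathcal{S}}$, which has full column rank, and $\mathbf{B}=\mathbf{H}\mathbf{V}_{\mathcal{S}}^{\mathrm{H}}$, which has full row rank $|\mathcal{F}|$ because $\mathbf{H}$ is invertible. Then
\[
(\mathbf{T}^{\alpha}_{\mathcal{S}})^{\dagger}=(\mathbf{H}\mathbf{V}_{\mathcal{S}}^{\mathrm{H}})^{\dagger}\mathbf{V}_{\mathcal{S}}^{\dagger},
\qquad
(\mathbf{H}\mathbf{V}_{\mathcal{S}}^{\mathrm{H}})^{\dagger}=(\mathbf{V}_{\mathcal{S}}^{\mathrm{H}})^{\dagger}\mathbf{H}^{-1}.
\]
To be safe, I would check the latter directly against the four Penrose conditions.

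\emph{Conclusion.} Substituting gives
\[
\mathbf{T}^{\alpha}_{\mathcal{V}\mathcal{S}}(\mathbf{T}^{\alpha}_{\mathcal{S}})^{\dagger}\bm{x}_{\mathcal{S}}
=\mathbf{V}\mathbf{H}\big[\mathbf{V}_{\mathcal{S}}^{\mathrm{H}}(\mathbf{V}_{\mathcal{S}}^{\mathrm{H}})^{\dagger}\big]\mathbf{H}^{-1}\big[\mathbf{V}_{\mathcal{S}}^{\dagger}\mathbf{V}_{\mathcal{S}}\big]\bm{c}.
\]
Both bracketed products equal the $|\mathcal{F}|\times|\mathcal{F}|$ identity: the first by full row rank of $\mathbf{V}_{\mathcal{S}}^{\mathrm{H}}$, the second by full column rank of $\mathbf{V}_{\mathcal{S}}$. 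The expression therefore collapses to $\mathbf{V}\bm{c}=\bm{x}$. This proves \eqref{xR=TTxS} and shows that it agrees with the reconstruction operator \eqref{R}.
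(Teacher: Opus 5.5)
Your argument is correct under the kernel hypothesis you add. It shares the skeleton of the paper's proof in Appendix~\ref{AC}: substitute $\mathbf{T}^{\alpha}=\mathbf{F}^{-\alpha}h(\mathbf{\Delta}^{\alpha})\mathbf{F}^{\alpha}$ and collapse $\mathbf{T}^{\alpha}_{\mathcal{V}\mathcal{S}}(\mathbf{T}^{\alpha}_{\mathcal{S}})^{\dagger}$ to $\mathbf{F}^{-\alpha}_{\mathcal{V}\mathcal{F}}(\mathbf{F}^{-\alpha}_{\mathcal{S}\mathcal{F}})^{\dagger}$; your operator is exactly $\mathbf{V}\mathbf{V}_{\mathcal{S}}^{\dagger}$. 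The two routes differ in how the kernel and the pseudo-inverse are handled.

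The paper keeps a general nonnegative low-pass $h$. It in effect uses the always-valid identity $\mathbf{M}^{\mathrm{H}}(\mathbf{M}\mathbf{M}^{\mathrm{H}})^{\dagger}=\mathbf{M}^{\dagger}$ with $\mathbf{M}=\mathbf{F}^{-\alpha}_{\mathcal{S}\mathcal{V}}h^{1/2}(\mathbf{\Delta}^{\alpha})$, normalizes by $\rho$, and then discards the out-of-band entries of $(h/\rho)^{1/2}$ as negligible in \eqref{approximation}. So it only reaches $\bm{x}_{\mathcal{R}}\approx\mathbf{R}\bm{x}_{\mathcal{S}}$, hence the later ``$\mathbf{R}\approx\cdots$''. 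It also leans on the discussion after Theorem~\ref{thm2} for the exactness of $\mathbf{R}$ itself.

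You instead assume $h$ vanishes exactly off $\mathcal{F}$, which turns that truncation into an identity. You replace the $h^{1/2}$ splitting by the reverse-order law for a full-rank factorization, and you derive the needed full column rank of $\mathbf{F}^{-\alpha}_{\mathcal{S}\mathcal{F}}$ explicitly from Theorem~\ref{thm2}. You then check $\bm{x}_{\mathcal{R}}=\bm{x}$ directly on $\bm{x}=\mathbf{V}\bm{c}$, and reading $\bm{x}_{\mathcal{S}}$ as an $|\mathcal{S}|$-vector also removes the paper's dimension mismatch.

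Your version gives a genuine equality and needs only $\mathbf{H}$ invertible, with no sign condition; the $h^{1/2}$ trick needs $h\ge 0$. It covers exactly the kernel $\mathbf{\Sigma}_{\mathcal{F}}$ used by MaxSigMin, MinTrac, MinPinv, MaxSig and MaxVol. The paper's version speaks to non-ideal kernels such as the polynomial $h$ behind MaxCov, but only approximately. That limitation is unavoidable. If $h>0$ on the whole spectrum and $|\mathcal{S}|<N$, exactness would force $\mathcal{B}^{\alpha}\subseteq\mathrm{range}(\mathbf{T}^{\alpha}_{\mathcal{V}\mathcal{S}})=\mathbf{T}^{\alpha}\,\mathrm{range}(\mathbf{D})$. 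Since $\mathbf{T}^{\alpha}$ is invertible and leaves $\mathcal{B}^{\alpha}$ invariant, every signal in $\mathcal{B}^{\alpha}$ would then have to vanish on $\mathcal{S}^{c}$.

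One small overstatement in your caveat: the same computation works verbatim when $\mathrm{supp}(h)=\mathcal{F}'\supseteq\mathcal{F}$, provided $\mathbf{F}^{-\alpha}_{\mathcal{S}\mathcal{F}'}$ has full column rank. So support exactly on $\mathcal{F}$ is sufficient, but not strictly necessary.
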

\begin{proof}
	The proof is provided in Appendix~\ref{AC}.
\end{proof}

Using Eqs.~\eqref{xR=RDx}, \eqref{R}, and \eqref{xR=TTxS}, the recovery operator $\mathbf{R}$ can be approximated as
\begin{equation}
	\mathbf{R}
	\approx
	\mathbf{T}^{\alpha}_{\mathcal{V}\mathcal{S}}
	\left( \mathbf{T}^{\alpha}_{\mathcal{S}} \right)^{\dag}.
\end{equation}

When the samples are contaminated by Gaussian noise $\bm{\xi}$, the reconstruction error is obtained directly from Eq.~\eqref{xR=TTxS}. The error is
\begin{equation}
	\bm{e}
	= \mathbf{R}(\bm{x}_{\mathcal{S}} + \bm{\xi}) - \bm{x}
	= \mathbf{R}\bm{\xi}
	= \mathbf{T}^{\alpha}_{\mathcal{V}\mathcal{S}}
	\left( \mathbf{T}^{\alpha}_{\mathcal{S}} \right)^{\dag}
	\bm{\xi}.  \label{e=TTxi}
\end{equation}
Equivalently,
\[
\begin{aligned}\bm{e}
	=&\mathbf{F}^{-\alpha}
	h(\mathbf{\Delta}^{\alpha})
	\mathbf{F}^{\alpha}_{\mathcal{S}\mathcal{V}}
	\left(
	\mathbf{F}^{-\alpha}_{\mathcal{S}\mathcal{V}}
	h(\mathbf{\Delta}^{\alpha})
	\mathbf{F}^{\alpha}_{\mathcal{S}\mathcal{V}}
	\right)^{\dag}
	\bm{\xi}\\ 
	=& \left( \mathbf{T}^{\alpha} \right)^{1/2}
	\big(
	( \mathbf{T}^{\alpha} )^{1/2}_{\mathcal{S}\mathcal{V}}
	\big)^{\dag}
	\bm{\xi}.\end{aligned} 
\]

The corresponding error covariance matrix is
\[
\mathbf{E}
=
\left( \mathbf{T}^{\alpha} \right)^{1/2}
\big(
\left( \mathbf{T}^{\alpha} \right)^{1/2}_{\mathcal{S}\mathcal{V}}
\big)^{\dag}
\mathbf{I}_{|\mathcal{S}|}
\Big( \big(
\left( \mathbf{T}^{\alpha} \right)^{1/2}_{\mathcal{S}\mathcal{V}}
\big)^{\mathrm{H}}\Big)^{\dag}
\left( \mathbf{T}^{\alpha} \right)^{1/2},
\]
which simplifies to
\[
\mathbf{E}
=
\left( \mathbf{T}^{\alpha} \right)^{1/2}
\Big( \big(
\left( \mathbf{T}^{\alpha} \right)^{1/2}_{\mathcal{S}\mathcal{V}}\big)^{\mathrm{H}}
\left( \mathbf{T}^{\alpha} \right)^{1/2}_{\mathcal{S}\mathcal{V}}
\Big)^{\dag}
\left( \mathbf{T}^{\alpha} \right)^{1/2}.
\]
Since the filter-based reconstruction operator coincides with the localization-based operator in Eq.~\eqref{xR=TTxS}, both produce the same error covariance $\mathbf{E}$. 

Similar to \cite{GFTSSS,GLCTsampling}, we analyze the error covariance matrix $\mathbf{E}$ in terms of its spectral norm, which can be bounded as
\[
\begin{aligned}
	\| \mathbf{E}\|_{2} 
	&= \Big\| (\mathbf{T}^{\alpha})^{1/2} \Big( (\mathbf{T}^{\alpha})^{-1/2}_{\mathcal{S}\mathcal{V}} (\mathbf{T}^{\alpha})^{1/2}_{\mathcal{S}\mathcal{V}} \Big)^{\dag} (\mathbf{T}^{\alpha})^{1/2} \Big\|_2 \\
	&\le \| (\mathbf{T}^{\alpha})^{1/2} \|_2 \ 
	\Big\| \Big( (\mathbf{T}^{\alpha})^{-1/2}_{\mathcal{S}\mathcal{V}} (\mathbf{T}^{\alpha})^{1/2}_{\mathcal{S}\mathcal{V}} \Big)^{\dag} \Big\|_2 \
	\| (\mathbf{T}^{\alpha})^{1/2} \|_2.
\end{aligned}
\]
Since $\| (\mathbf{T}^{\alpha})^{1/2} \|_2$ is fixed, minimizing the reconstruction error reduces to
\begin{equation}
	\mathcal{S}^{\mathrm{opt}} = \argmin_{\mathcal{S} \subseteq \mathcal{V}} \| \mathbf{E} \|_2 
	= \argmin_{\mathcal{S} \subseteq \mathcal{V}} \Big\| (\mathbf{T}^{\alpha})^{1/2}_{\mathcal{S}\mathcal{V}} \Big)^{\dag} \Big\|_2. \label{norm}
\end{equation}

This objective satisfies
\begin{equation}
	\begin{aligned}
		\Big\| (\mathbf{T}^{\alpha})^{1/2}_{\mathcal{S}\mathcal{V}} \Big)^{\dag} \Big\|_2 
		&\le \Big\| (\mathbf{T}^{\alpha})^{1/2}_{\mathcal{S}\mathcal{V}} \Big)^{\dag} \Big\|_F \\
		&= \sqrt{\mathrm{tr} \Big[ \big( (\mathbf{T}^{\alpha})^{1/2}_{\mathcal{S}\mathcal{V}} \big)^{\dag}  \left( \big( (\mathbf{T}^{\alpha})^{1/2}_{\mathcal{S}\mathcal{V}}\big)^{\dag}  \right)^{\mathrm{H}} \Big]} \\
		&= \sqrt{\mathrm{tr} \Big[ \left( \mathbf{T}^{\alpha}_{\mathcal{S}}\right) ^{-1} \Big]}.
	\end{aligned} \label{trace}
\end{equation}

Similarly, one can express the optimization in terms of the trace, namely
\begin{equation}
	\mathcal{S}^{\mathrm{opt}} = \argmax_{\mathcal{S} \subseteq \mathcal{V}} \Big\| (\mathbf{T}^{\alpha})^{1/2}_{\mathcal{S}\mathcal{V}} \Big\|_2 
	=\argmax_{\mathcal{S} \subseteq \mathcal{V}}  \sqrt{\mathrm{tr} \left[\mathbf{T}^{\alpha}_{\mathcal{S}}\right]}. \label{maxtrace}
\end{equation}

The determinant of the error covariance can be written as 
\[
\begin{aligned}
	\det[\mathbf{E}]=& \det \big[ (\mathbf{T}^{\alpha})^{1/2} (\mathbf{F}^{-\alpha} h^{1/2}(\bm{\Delta}^{\alpha}) \mathbf{F}^{-\alpha}_{\mathcal{S}\mathcal{V}} \mathbf{F}^{\alpha}_{\mathcal{S}\mathcal{V}} h^{1/2}(\bm{\Delta}^{\alpha}) \mathbf{F}^{\alpha} )^{\dag} (\mathbf{T}^{\alpha})^{1/2} \big] \\
	=& \det	\big[ (\mathbf{T}^{\alpha})^{1/2} \mathbf{F}^{-\alpha} \big] 
	\det\big[( h^{1/2}(\bm{\Delta}^{\alpha}_{\mathcal{F}}) \mathbf{F}^{\alpha}_{\mathcal{S}\mathcal{F}} \mathbf{F}^{-\alpha}_{\mathcal{S}\mathcal{F}} h^{1/2}(\bm{\Delta}^{\alpha}_{\mathcal{F}}) )^{\dag}\big] \det\big[ \mathbf{F}^{\alpha} (\mathbf{T}^{\alpha})^{1/2} \big]   \\
	\overset{\text{(a)}}{=}& \det\big[ (\mathbf{T}^{\alpha})^{1/2} \mathbf{F}^{-\alpha}\big]  
	\det\left[ (\mathbf{F}^{-\alpha}_{\mathcal{S}\mathcal{F}} h(\bm{\Delta}^{\alpha}_{\mathcal{F}}) \mathbf{F}^{\alpha}_{\mathcal{S}\mathcal{F}})^{\dag}\right]  
	\det\big[ \mathbf{F}^{\alpha} (\mathbf{T}^{\alpha})^{1/2}\big]  \\
	\overset{\text{(b)}}{=}& \det\big[ (\mathbf{T}^{\alpha})^{1/2} \mathbf{F}^{-\alpha}\big]  
	\det\big[ (\mathbf{T}^{\alpha}_{\mathcal{S}})^{-1}\big]  
	\det\big[ \mathbf{F}^{\alpha} (\mathbf{T}^{\alpha})^{1/2}\big] ,
\end{aligned}
\]
where (a) holds because the two matrices are congruent via $h^{1/2}(\bm{\Delta}^{\alpha}_{\mathcal{F}})$, preserving all nonzero eigenvalues, and (b) follows similarly to the proof of Theorem~\ref{thm3}, assuming negligible components outside $\mathcal{F}$. Consequently, the optimal sampling set with respect to the determinant criterion is
\begin{equation}
	\mathcal{S}^{\mathrm{opt}} = \argmin_{\mathcal{S} \subseteq \mathcal{V}} \det[\mathbf{E}] = \argmin_{\mathcal{S} \subseteq \mathcal{V}} \det\left[ (\mathbf{T}^{\alpha}_{\mathcal{S}})^{-1}\right] . \label{det}
\end{equation}

Therefore, minimizing the reconstruction error can be achieved by selecting sampling strategies that optimize matrix-based criteria such as the spectral norm \eqref{norm}, trace \eqref{trace}, or determinant \eqref{det}.

\section{Optimal Fractional Sampling Strategies}
\label{OptimalSampling}

Effective sampling of graph signals requires not only determining the number of samples but also selecting informative sampling nodes, as their locations directly affect reconstruction quality. In the GFRFT domain, sampling similarly depends on the choice of an optimal sampling set. Extending the reconstruction error minimization strategy in~\cite{GFRFTsampling}, this section considers two additional criteria, namely maximizing the graph fractional cutoff frequency and maximizing localization based information content. These strategies are unified within a localization framework.

\subsection{GFRFT Sampling Based on Cutoff Frequency}
In classical graph sampling theory, the bandwidth is determined by low-order eigenvalues of the graph Laplacian matrix. In the GFRFT framework, by selecting the graph Laplacian as the graph shift operator, the graph fractional Laplacian matrix \cite{GFRFT_unified} can be derived from Eq. \eqref{Salpha} as
\begin{equation}
	\mathbf{L}^{\alpha} := \mathbf{F}^{-\alpha}\bm{\Delta}^{\alpha}\mathbf{F}^{\alpha}.
\end{equation}
This operator incorporates both fractional eigenvalues and GFRFT-based eigenvectors\footnote{Unlike the classical construction $\mathbf{U}\bm{\Delta}^{\alpha}\mathbf{U}^{\mathrm{H}}$, the basis is given by the GFRFT matrix $\mathbf{F}^{\alpha}$.}, producing a spectrum whose structure varies with the order~$\alpha$ and better reflects the intrinsic smoothness properties of graph signals.

To evaluate a sampling set $\mathcal{S}$, we first determine its associated cutoff frequency, defined as the largest $\omega$ for which $\mathcal{S}$ guarantees uniqueness in the graph fractional bandlimited space. Following~\cite{GFTefficient}, $\omega(\phi)$ is approximated by $\omega_{k}(\phi)$ and we define the $k$-th order cutoff estimate as
\[
\Omega_{k}(\mathcal{S})
:= \min_{\phi \in \mathcal{D}^c} \omega_k (\phi)
=  \min_{\phi \in \mathcal{D}^c}
\left(
\frac{\|(\mathbf{L}^{\alpha})^{k} \phi\|}
{\|\phi\|}
\right)^{1/k},
\]
where $\mathcal{D}^c$ denotes the complement of $\mathcal{D}$, and the classical operator $\mathbf{L}^{k}$ is replaced by its GFRFT-based counterpart to characterize smoothness in the fractional domain. This yields
\[
\left((\mathbf{L}^{\alpha})^{\mathrm{H}}\right)^{k}
(\mathbf{L}^{\alpha})^{k}
= (\mathbf{L}^{\alpha})^{2k}.
\]

Therefore, the cutoff frequency is equivalently obtained by minimizing the Rayleigh quotient over its complement~$\mathcal{S}^c$,
\[
\Omega_{k}(\mathcal{S})
=
\Bigg[
\min_{\psi \neq 0}
\frac{
	\psi^{\mathrm{H}}
	(\mathbf{L}^{\alpha})^{2k}_{\mathcal{S}^c}\,
	\psi
}{
	\psi^{\mathrm{H}}\psi
}
\Bigg]^{1/2k}
=
\Big(
\lambda_{\min}\big((\mathbf{L}^{\alpha})^{2k}_{\mathcal{S}^c}\big)
\Big)^{1/2k},
\]
where the associated eigenvector $\psi_{\min,k}$ corresponds to the smoothest graph fractional signal that cannot be uniquely reconstructed under the sampling set~$\mathcal{S}$. We define
\[
\phi^{\ast}_{k}(i)=
\begin{cases}
	\psi_{\min,k}(i), & i\in \mathcal{S}^c,\\
	0, & i\in \mathcal{S}.
\end{cases}
\]
The constructed signal characterizes the worst-case ambiguity associated with the current sampling set, and its largest-energy vertex $|\phi^{\ast}_{k}(i)|^{2}$ identifies the most effective location to improve recoverability by increasing $\Omega_{k}(\mathcal{S})$ and suppressing indistinguishable modes. This naturally leads to a greedy strategy in which $(\mathbf{L}^{\alpha})^{2k}$ is formed at each iteration, its restriction to $\mathcal{S}^c$ is extracted, the smallest eigenvector is obtained, and the vertex with maximal energy is included in $\mathcal{S}$. Since fractional powers are applied to both eigenvalues and eigenvectors in $\mathbf{L}^{\alpha}$, the resulting sampling set better captures the intrinsic fractional smoothness of graph fractional bandlimited signals. 

\textit{Maximizing cutoff frequency (MaxCut):} The sampling set is chosen to minimize the energy of the smoothest unrecoverable fractional mode on the unsampled vertices, leading to the optimization problem as follows
\begin{equation}
	\mathcal{S}^{\mathrm{opt}} = \argmax_{\mathcal{S} \subseteq \mathcal{V}}~
	\lambda_{\min}\Big( (\mathbf{L}^{\alpha})^{2k}_{\mathcal{S}^c} \Big), \label{S0}
\end{equation}
with the next vertex at iteration $m$ selected as
\begin{equation}
	y^{\mathrm{opt}} =\argmax_{y \in \mathcal{S}^c_m}~ \lambda_{\min}\Big( (\mathbf{L}^{\alpha})^{2k}_{\mathcal{S}_m^c} \Big)(y). 
	\label{y0}
\end{equation}
Since the graph fractional localization operator $\mathbf{T}^{\alpha}$ with $h(\bm{\Delta}^{\alpha})=\bm{\Delta}^{\alpha}$ is defined analogously to $\mathbf{L}^{\alpha}$ in Eq.~\eqref{Talpha}, the same objective can be expressed as
\[
\mathcal{S}^{\mathrm{opt}} 
= \argmax_{\mathcal{S} \subseteq \mathcal{V}}~
\lambda_{\min}\Big( (\mathbf{T}^{\alpha})^{2k}_{\mathcal{S}^c} \Big) = \argmin_{\mathcal{S} \subseteq \mathcal{V}}
\Big\| \big((\mathbf{T}^{\alpha})^{2k}_{\mathcal{S}^c}\big)^{-1} \Big\|_2.
\]

\subsection{GFRFT Sampling Based on Error Minimization}

When the bandwidth satisfies $|\mathcal{F}| \leq |\mathcal{S}|$, the sampling set can be chosen by minimizing the reconstruction error \cite{GFTsampling}. Although the optimal sampling strategy can be directly derived using the localization operators in Eqs.~\eqref{e=TTxi}--\eqref{det}, we employ the GFRFT operator for theoretical completeness. For the noisy observation modeled as
\[
\bm{x}_{\mathcal{R}} = \mathbf{R} (\mathbf{D}\bm{x} + \bm{\xi}),
\]
the resulting error is, according to Eq.~\eqref{R},
\[
\bm{e} = \mathbf{R}\bm{\xi}
= \mathbf{F}^{-\alpha}_{\mathcal{V}\mathcal{F}}
\left(\mathbf{F}^{-\alpha}_{\mathcal{S}\mathcal{F}}\right)^{\dagger} \bm{\xi}. 
\]

\textit{Maximizing singular value of the minimum (MaxSigMin):} Minimizing the $\ell_2$-norm of the error yields the optimization problem \cite{GFRFTsampling},
\[
\mathcal{S}^{\mathrm{opt}}
= \argmin_{\mathcal{S} \subseteq \mathcal{V}}\|\bm{e}\|_2
= \argmin_{\mathcal{S} \subseteq \mathcal{V}}\left\|
\mathbf{F}^{-\alpha}_{\mathcal{V}\mathcal{F}}
\left(\mathbf{F}^{-\alpha}_{\mathcal{S}\mathcal{F}}\right)^{\dagger}
\bm{\xi}
\right\|_2.
\]

Using the Cauchy--Schwarz inequality, the upper bound of the error norm becomes
\[
\|\bm{e}\|_2
\le
\left\| \mathbf{F}^{-\alpha}_{\mathcal{V}\mathcal{F}} \right\|_2
\cdot
\left\|
\left(\mathbf{F}^{-\alpha}_{\mathcal{S}\mathcal{F}}\right)^{\dagger}
\right\|_2
\cdot
\|\bm{\xi}\|_2.
\]
Since $\| \mathbf{F}^{-\alpha}_{\mathcal{V}\mathcal{F}} \|_2$ and $\|\bm{\xi}\|_2$ are constants with respect to $\mathcal{S}$, the objective reduces to minimizing the spectral norm of the pseudo-inverse. This is equivalent to maximizing the smallest singular value of $\mathbf{F}^{-\alpha}_{\mathcal{S}\mathcal{F}}$, namely
\begin{equation}
	\mathcal{S}^{\mathrm{opt}}
	=
	\argmax_{\mathcal{S} \subseteq \mathcal{V}}~
	\sigma_{\min}\!\left( \mathbf{F}^{-\alpha}_{\mathcal{S}\mathcal{F}} \right).
	\label{S1}
\end{equation}
This optimization problem can be solved efficiently via a greedy strategy. At the $m$-th iteration, the next vertex $y^{\mathrm{opt}}$ is selected as
\begin{equation}
	y^{\mathrm{opt}}
	= \argmin_{y \in \mathcal{S}_m^{c}}~
	\sigma_{\min}\left( \mathbf{F}^{-\alpha}_{(\mathcal{S}_m\cup y) \mathcal{F}} \right).
	\label{y1}
\end{equation}

Based on Eq.~\eqref{S1}, the optimal sampling set can be reformulated using the localization operator
\[
\begin{aligned}
	\mathcal{S}^{\mathrm{opt}}
	=& \argmax_{\mathcal{S} \subseteq \mathcal{V}}~
	\sigma_{\min}\!\left( \mathbf{F}^{-\alpha}_{\mathcal{S}\mathcal{F}} \right) \\
	=& \argmax_{\mathcal{S} \subseteq \mathcal{V}}~
	\sigma_{\min}\!\left( \mathbf{F}^{\alpha}_{\mathcal{F}\mathcal{S}} \right) \\
	=& \argmax_{\mathcal{S} \subseteq \mathcal{V}}~
	\sigma_{\min}\!\left( \mathbf{F}^{-\alpha}
	\boldsymbol{\Sigma}
	\mathbf{F}^{\alpha}
	\mathbf{D}
	\right) \\
	=& \argmin_{\mathcal{S} \subseteq \mathcal{V}}
	\left\|
	\left( \mathbf{T}^{\alpha}_{\mathcal{V}\mathcal{S}} \right)^{\dagger}
	\right\|_2,
\end{aligned}
\]
where $\mathbf{T}^{\alpha}$ denotes the spectral localization operator with $h(\boldsymbol{\Delta}^{\alpha}) = \boldsymbol{\Sigma}$. Therefore, MaxSigMin is equivalent to the classical E-optimal design criterion \cite{Design}.

\textit{Minimizing trace (MinTrac):} 
The objective is to minimize the trace of the error covariance matrix, which corresponds to minimizing the total reconstruction error energy. Using the error covariance derived from Eq.~\eqref{R}, the optimization problem is formulated as
\begin{equation}
	\begin{aligned}
		\mathcal{S}^{\mathrm{opt}}
		& = \argmin_{\mathcal{S} \subseteq \mathcal{V}}~ \mathrm{tr}(\bm{e}\bm{e}^{\mathrm{H}}) \\
		& = \argmin_{\mathcal{S} \subseteq \mathcal{V}}~
		\mathrm{tr}\!\left[
		\left(\mathbf{F}^{\alpha}_{\mathcal{S}\mathcal{F}}\right)^{\dagger}
		\mathbf{F}^{\alpha}_{\mathcal{V}\mathcal{F}}
		\mathbf{F}^{-\alpha}_{\mathcal{V}\mathcal{F}}
		\left(\mathbf{F}^{-\alpha}_{\mathcal{S}\mathcal{F}}\right)^{\dagger}
		\right] \\
		& = \argmin_{\mathcal{S} \subseteq \mathcal{V}}~
		\mathrm{tr}\!\left[
		\left(
		\mathbf{F}^{\alpha}_{\mathcal{S}\mathcal{F}}
		\mathbf{F}^{-\alpha}_{\mathcal{S}\mathcal{F}}
		\right)^{-1}
		\right].
	\end{aligned}
	\label{SJ2}
\end{equation}
Thus, the optimal sampling set is obtained by minimizing the trace of the inverse restricted spectral operator via a greedy procedure. At the $m$-th iteration, the vertex selected is
\begin{equation}
	y^{\mathrm{opt}}
	=
	\argmin_{y \in \mathcal{S}^{c}_{m}}~
	\mathrm{tr}\!\left[
	\left(
	\mathbf{F}^{\alpha}_{(\mathcal{S}_{m} \cup y)\mathcal{F}}
	\mathbf{F}^{-\alpha}_{(\mathcal{S}_{m} \cup y)\mathcal{F}}
	\right)^{-1}
	\right].
	\label{y2}
\end{equation}

The trace-minimization problem in \eqref{SJ2} can be equivalently written as
\[
\mathcal{S}^{\mathrm{opt}}
= \argmin_{\mathcal{S} \subseteq \mathcal{V}}~
\mathrm{tr}\!\left[
\left(
\mathbf{F}^{\alpha}_{\mathcal{S}\mathcal{F}}
\mathbf{F}^{-\alpha}_{\mathcal{S}\mathcal{F}}
\right)^{-1}
\right]
= \argmin_{\mathcal{S} \subseteq \mathcal{V}}~
\mathrm{tr}\!\left[
\left(
\mathbf{T}^{\alpha}_{\mathcal{S}}
\right)^{-1}
\right],
\]
where the operator $\mathbf{T}^{\alpha}$ corresponds to the choice $h(\boldsymbol{\Delta}^{\alpha})=\boldsymbol{\Sigma}$, similar to the MaxSigMin. Under this formulation, MinTrac is equivalent to the A-optimal design criterion.

\subsection{GFRFT Sampling Based on Localized Basis}
\label{Sec:4.3}
Similarly, when $|\mathcal{F}| \leq |\mathcal{S}|$, the signal $\boldsymbol{x}$ can be reconstructed by using the vertex-limiting operator $\mathbf{D}$ together with the graph fractional bandlimiting operator $\mathbf{B}^{\alpha}$. Perfect recovery is guaranteed when $\boldsymbol{x}$ satisfies Theorem~\ref{thm1}. Under the localized basis framework, the reconstruction error is given by
\[
\boldsymbol{e}
= \mathbf{R}\boldsymbol{\xi}
= \left( \mathbf{D} \mathbf{B}^{\alpha} \right)^{\dagger} \boldsymbol{\xi}
= \left( \mathbf{B}^{\alpha} \mathbf{D} \mathbf{B}^{\alpha} \right)^{\dagger} \mathbf{D}\boldsymbol{\xi}.
\]

\textit{Minimizing the Frobenius norm of the pseudo-inverse (MinPinv):}
Analogous to Eq.~\eqref{trace}, this criterion selects the sampling set that minimizes the Frobenius norm of the reconstruction operator,
\[
\mathcal{S}^{\mathrm{opt}}
= \argmin_{\mathcal{S} \subseteq \mathcal{V}}
\left\|
\left( \mathbf{B}^{\alpha} \mathbf{D} \mathbf{B}^{\alpha} \right)^{\dagger}
\mathbf{D}\boldsymbol{\xi}
\right\|_{F}.
\]
By applying the Cauchy--Schwarz inequality, the above quantity admits the upper bound
\[
\left\|
\left( \mathbf{B}^{\alpha} \mathbf{D} \mathbf{B}^{\alpha} \right)^{\dagger}
\mathbf{D}\boldsymbol{\xi}
\right\|_{F}
\leq
\left\|
\left( \boldsymbol{\Sigma}\mathbf{F}^{\alpha}\mathbf{D} \right)^{\dagger}
\right\|_{F}
\cdot
\left\|\mathbf{D}\boldsymbol{\xi}\right\|_{F}.
\]
To make the dependence on the sampling set explicit, we restore the corresponding indices. Since $\left\|\mathbf{D}\boldsymbol{\xi}\right\|_{F}$ is constant, the optimization reduces to
\begin{equation}
	\begin{aligned}
		\mathcal{S}^{\mathrm{opt}}
		=&~
		\argmin_{\mathcal{S} \subseteq \mathcal{V}}
		\left\|
		\left(
		\boldsymbol{\Sigma}_{\mathcal{F}}
		\mathbf{F}^{\alpha}
		\mathbf{D}_{\mathcal{S}}
		\right)^{\dagger}
		\right\|_{F}
		\\[2mm]
		=&~
		\argmin_{\mathcal{S} \subseteq \mathcal{V}}
		\sum_{i=1}^{|\mathcal{F}|}
		\frac{1}{\sigma_i\!\left(
			\boldsymbol{\Sigma}_{\mathcal{F}}
			\mathbf{F}^{\alpha}
			\mathbf{D}_{\mathcal{S}}
			\right)} .
	\end{aligned}
	\label{SJ3}
\end{equation}
Following this, the vertex-selection rule becomes
\begin{equation}
	y^{\mathrm{opt}}
	= \argmin_{y \in \mathcal{S}_m^{c}}
	\sum_{i=1}^{|\mathcal{F}|}
	\frac{1}{\sigma_i\!\left(
		\boldsymbol{\Sigma}_{\mathcal{F}}
		\mathbf{F}^{\alpha}
		\mathbf{D}_{\mathcal{S}_m \cup y}
		\right)} .
	\label{y3}
\end{equation}

According to the above Eq. \eqref{SJ3}, the criterion based on pseudo-inverse can be further localized as
\[
\begin{aligned}
	\mathcal{S}^{\mathrm{opt}}
	=&~
	\argmin_{\mathcal{S} \subseteq \mathcal{V}}
	\sum_{i=1}^{|\mathcal{F}|}
	\frac{1}{\sigma_i\!\left(
		\boldsymbol{\Sigma}_{\mathcal{F}}
		\mathbf{F}^{\alpha}
		\mathbf{D}_{\mathcal{S}}
		\right)} 
	\\[1mm]
	=&~
	\argmin_{\mathcal{S} \subseteq \mathcal{V}}
	\left\|
	\left( \mathbf{T}^{\alpha}_{\mathcal{V}\mathcal{S}} \right)^{\dagger}
	\right\|_{F}
	= 
	\argmin_{\mathcal{S} \subseteq \mathcal{V}}~
	\mathrm{tr}\!\left[
	\left( \mathbf{T}^{\alpha}_{\mathcal{S}} \right)^{-1}
	\right],
\end{aligned}
\label{S3}
\]
which is consistent with the error minimization principle, where
$\mathbf{T}^{\alpha} = \mathbf{F}^{-\alpha}\boldsymbol{\Sigma}_{\mathcal{F}} \mathbf{F}^{\alpha}$. It is worth noting that, after localization, the MinPinv criterion leads to the same selection rule for MinTrac, indicating that MinPinv is also equivalent to the A-optimal design.

\textit{Maximizing singular values (MaxSig):}
In contrast to the MinPinv strategy, this criterion seeks to maximize the Frobenius norm of 
$\mathbf{B}^{\alpha} \mathbf{D} \mathbf{B}^{\alpha}$, thereby enhancing numerical stability,
\begin{equation}
	\begin{aligned}
		\mathcal{S}^{\mathrm{opt}}
		=&~ \argmax_{\mathcal{S} \subseteq \mathcal{V}}
		\left\|
		\boldsymbol{\Sigma}_{\mathcal{F}}
		\mathbf{F}^{\alpha}
		\mathbf{D}_{\mathcal{S}}
		\right\|_{F}
		\\
		=&~
		\argmax_{\mathcal{S} \subseteq \mathcal{V}}
		\sum_{i=1}^{|\mathcal{F}|}
		\sigma_i\!\left(
		\boldsymbol{\Sigma}_{\mathcal{F}}
		\mathbf{F}^{\alpha}
		\mathbf{D}_{\mathcal{S}}
		\right).
	\end{aligned}
	\label{SJ4}
\end{equation}
and thus a greedy procedure is again employed, selecting at each iteration the vertex that maximizes
\begin{equation}
	y^{\mathrm{opt}}
	= \argmax_{y \in \mathcal{S}^{c}_{m}}
	\sum_{i=1}^{|\mathcal{F}|}
	\sigma_i\!\left(
	\boldsymbol{\Sigma}_{\mathcal{F}}
	\mathbf{F}^{\alpha}
	\mathbf{D}_{\mathcal{S}_{m} \cup y}
	\right).
	\label{y4}
\end{equation}

Its localized formulation becomes
\[
\mathcal{S}^{\mathrm{opt}}
= \argmax_{\mathcal{S} \subseteq \mathcal{V}}
\left\|
\mathbf{T}^{\alpha}_{\mathcal{V}\mathcal{S}}
\right\|_{F}
= \argmax_{\mathcal{S} \subseteq \mathcal{V}}
\mathrm{tr}\!\left[
\mathbf{T}^{\alpha}_{\mathcal{S}}
\right],
\label{S4}
\]
hence, MaxSig criterion is equivalent to Eq.~\eqref{maxtrace}, and therefore corresponds to the T-optimal design.

\textit{Maximizing the volume of the parallelepiped (MaxVol):}
This criterion focuses on maximizing the volume spanned by the selected rows of the transform matrix. Analogous to Eq.~\eqref{det}, this can be achieved by maximizing the determinant,
\begin{equation}
	\begin{aligned}
		\mathcal{S}^{\mathrm{opt}}
		= & \argmax_{\mathcal{S} \subseteq \mathcal{V}}
		~\det\!\left[
		\mathbf{F}^{-\alpha}_{\mathcal{S}\mathcal{F}}
		\mathbf{F}^{\alpha}_{\mathcal{S}\mathcal{F}}
		\right]\\
		= &\argmax_{\mathcal{S} \subseteq \mathcal{V}}
		~\det\!\left[
		\left(
		\mathbf{F}^{-\alpha}
		\boldsymbol{\Sigma}_{\mathcal{F}}
		\mathbf{F}^{\alpha}
		\right)_{\mathcal{S}}
		\right],
	\end{aligned}\label{SJ5}
\end{equation}
and a greedy selection procedure is again applied, where at the $m$-th iteration the chosen vertex satisfies
\begin{equation}
	y^{\mathrm{opt}}
	= \argmax_{y \in \mathcal{S}^{c}_{m}}~\det\!\left[
	\left(
	\mathbf{F}^{-\alpha}
	\boldsymbol{\Sigma}_{\mathcal{F}}
	\mathbf{F}^{\alpha}
	\right)_{\mathcal{S}_{m} \cup y}
	\right].
	\label{y5}
\end{equation}

The volume maximization in \eqref{SJ5} can be further rewritten in terms of the localized operator as
\[
\mathcal{S}^{\mathrm{opt}}
= \argmax_{\mathcal{S} \subseteq \mathcal{V}}
~\det\!\left[
\mathbf{T}^{\alpha}_{\mathcal{S}}
\right],
\label{S5}
\]
thus MaxVol corresponds precisely to the D-optimal design.

Eqs. \eqref{S0}–\eqref{y5} unify the six GFRFT-based sampling strategies within the localized filtering framework defined by the operator $\mathbf{T}^{\alpha}$, where each sampling rule is determined by the choice of spectral kernel $ h(\boldsymbol{\Delta}^{\alpha})$. The selected kernel corresponds to a specific spectral bandlimiting operator, thereby linking each sampling operator to its localized formulation. A summary of the associated objective functions is provided in Table~\ref{SMOFLO}.
\begin{table*}[t]
	\caption{Sampling Method with Objective Function and Localization Operator}
	\label{SMOFLO}
	\scriptsize
	\centering
	\begin{tabular}{ccll}
		\toprule
		Method & Optimal Design&  \ \ \ \ Objective Function& Localized Filter Operator \\
		\midrule
		MaxCut& E-optimal &$\argmax_{\mathcal{S} \subseteq \mathcal{V}}~
		\lambda_{\min}\Big( (\mathbf{L}^{\alpha})^{2k}_{\mathcal{S}^c} \Big)$  &$\argmin_{\mathcal{S} \subseteq \mathcal{V}}~\Big\| \big((\mathbf{T}^{\alpha})^{2k}_{\mathcal{S}^c}\big)^{-1} \Big\|_2$ \vspace{0.15cm}\\
		MaxSigMin& E-optimal &$\argmax_{\mathcal{S} \subseteq \mathcal{V}}~\sigma_{\min} \big( \mathbf{F}^{-\alpha}_{\mathcal{S} \mathcal{F}} \big)$  &$\argmin_{\mathcal{S} \subseteq \mathcal{V}}~ \Big\| \big( \mathbf{T}^{\alpha}_{\mathcal{V} \mathcal{S}} \big)^{\dagger} \Big\|_2$\vspace{0.15cm}\\
		MinTrac & A-optimal &$\argmin_{\mathcal{S} \subseteq \mathcal{V}}~\mathrm{tr} \Big[ \big( \mathbf{F}^{\alpha}_{\mathcal{S}\mathcal{F}} \mathbf{F}^{-\alpha}_{\mathcal{S}\mathcal{F}} \big)^{-1} \Big]$&$\argmin_{\mathcal{S} \subseteq \mathcal{V}}~ \mathrm{tr} \Big[ \big( \mathbf{T}^{\alpha}_{\mathcal{S}} \big)^{-1} \Big]$\vspace{0.15cm}\\
		MinPinv& A-optimal &$\argmin_{\mathcal{S} \subseteq \mathcal{V}}~\sum_{i=1}^{|\mathcal{F}|} 1/\sigma_i\left(\boldsymbol{\Sigma}_{\mathcal{F}} \mathbf{F}^{\alpha} \mathbf{D}_{\mathcal{S}}\right)$&$\argmin_{\mathcal{S} \subseteq \mathcal{V}}~ \mathrm{tr} \Big[ \big( \mathbf{T}^{\alpha}_{\mathcal{S}} \big)^{-1} \Big]$\vspace{0.15cm}\\
		MaxSig& T-optimal &$\argmax_{\mathcal{S} \subseteq \mathcal{V}}~\sum_{i=1}^{|\mathcal{F}|} \sigma_i\left(\boldsymbol{\Sigma}_{\mathcal{F}} \mathbf{F}^{\alpha} \mathbf{D}_{\mathcal{S}}\right)$ & $\argmax_{\mathcal{S} \subseteq \mathcal{V}}~ \mathrm{tr} \left[ \mathbf{T}^{\alpha}_{\mathcal{S}} \right]$\vspace{0.15cm}\\
		MaxVol& D-optimal &$ \argmax_{\mathcal{S} \subseteq \mathcal{V}}~\det \left[\left(  \mathbf{F}^{-\alpha} \boldsymbol{\Sigma}_{\mathcal{F}} \mathbf{F}^{\alpha}\right) _{\mathcal{S}} \right] $ &$ \argmax_{\mathcal{S} \subseteq \mathcal{V}}~ \det \left[ \mathbf{T}^{\alpha}_{\mathcal{S}} \right]$\\
		\bottomrule
	\end{tabular}
\end{table*}

\section{Fast Sampling for Graph Fractional Domain}
\label{FastSampling}
Although the optimization criteria summarized in Table~\ref{SMOFLO} effectively address the selection of optimal sampling sets in the GFRFT domain, maximizing or minimizing these cost functions requires repeated eigenvalue decompositions, together with iterative greedy updates, resulting in relatively high computational complexity. Inspired by \cite{GFTSSS}, we exploit the localized filtering operator defined in Eq.~\eqref{Talpha} to reduce the computational burden and accelerate the sampling procedure.

\subsection{GFRFT Sampling Based on Graph Fractional Localization Operator}
Missing samples are interpolated by the localized operator $\mathbf{T}^{\alpha}$ evaluated at the observed vertices. For a given vector $\bm{T}^{\alpha}_{h,i}$, its \textit{support} set indicates the region over which vertex $i$ can interpolate unobserved samples. The objective is therefore to maximize the ``coverage area'' of the sampling set $\mathcal{S}$, quantified by $\mathcal{C} \big(|\bm{T}^{\alpha}_{h,i}|\big)$, which leads to maximizing $\sum_{i\in \mathcal{S}} \mathcal{C} \big(|\bm{T}^{\alpha}_{h,i}|\big)$.

In other words, we aim to select the sampling set that achieves the \textit{maximizing coverage area (MaxCov)}, such that each operator $\bm{T}^{\alpha}_{h,i}$ exhibits sufficiently large energy in its primary support, while the overlap between the supports of different vertices $i \neq j$ remains small. Consequently, the corresponding optimization problem can be written as
\begin{equation}
	\mathcal{S}^{\mathrm{opt}}
	= \argmax_{\mathcal{S}\subseteq \mathcal{V}}
	\sum_{i\in \mathcal{S}}
	\Big\langle
	\Big( \epsilon \mathbf{1}_{N} - \sum_{j\in \mathcal{S},\, j\neq i} \big| \bm{T}^{\alpha}_{h,j} \big| \Big),
	\ \big| \bm{T}^{\alpha}_{h,i} \big|
	\Big\rangle ,
\end{equation}
where $\mathbf{1}_{N}$ denotes the all-ones vector of length $N$, and the scalar parameter $\epsilon = \frac{1}{N} \sum_{i \in \mathcal{V}} \sum_{j \in \mathcal{S}} \big| T^{\alpha}_{h,j}(i) \big|$ is empirically chosen. Following \cite{GFTSSS}, we employ a greedy algorithm to optimize this cost. At iteration $m$, the algorithm selects the vertex
\begin{equation}
	y^{\mathrm{opt}}
	= \argmax_{y\in \mathcal{S}^c_m}
	\Big\langle
	\Big( \epsilon \mathbf{1}_{N} - \sum_{j\in \mathcal{S}_{m}} \big| \bm{T}^{\alpha}_{h,j} \big| \Big)_{+},
	\ \big| \bm{T}^{\alpha}_{h,y} \big|
	\Big\rangle , \label{yopt}
\end{equation}
where $(\cdot)_{+}$ denotes the elementwise positive-part operator, which sets negative entries to zero while preserving nonnegative values.

In computing the weighted norm of $\bm{T}^{\alpha}_{h,y}$ in \eqref{yopt}, the contribution of $T^{\alpha}_{h,y}(i)$ is assigned a smaller weight if vertex $i$ is already covered, whereas entries corresponding to previously selected vertices $ j \in \mathcal{S}_m$ receive larger weights. This behavior arises because the kernel $h$ is a polynomial function.

\subsection{Computational Complexity and Algorithm}
Table~\ref{Computation} summarizes the computational complexity of seven GFRFT sampling strategies, including six representative methods and the proposed fast MaxCov approach. Here, $N$ denotes the size of the vertex domain, $|\mathcal{F}|$ the signal bandwidth in the GFRFT domain, $|\mathcal{S}|$ the number of selected vertices, and $P$ the number of nonzero entries in the localized operator $\mathbf{T}^{\alpha}$. Operator computations involve the construction of graph fractional operators through matrix eigendecomposition, which for all methods scales as $\mathcal{O}(N^3)$ due to the full spectral decomposition required by the GFRFT.
\begin{table}[h!]
	\centering
	\caption{Computational Complexity of GFRFT Sampling Strategies}
	\label{Computation}
	\footnotesize
	\begin{tabular}{lcc}
		\toprule
		Method & Operator Computations & Sampling Set Selection \\
		\midrule
		MaxCut     & \multirow{7}{*}{$\mathcal{O}(N^3)$}  & $\mathcal{O}(N^2 |\mathcal{S}|)$  \vspace{0.15cm}\\
		MaxSigMin  & & $\mathcal{O}(N |\mathcal{S}|^2 |\mathcal{F}|^2)$  \vspace{0.15cm}\\
		MinTrac    & & $\mathcal{O}(N |\mathcal{S}|^4)$  \vspace{0.15cm}\\
		MinPinv    & & $\mathcal{O}(N |\mathcal{S}|^2 |\mathcal{F}|^2)$ \vspace{0.15cm} \\
		MaxSig     & & $\mathcal{O}(N |\mathcal{S}|^2 |\mathcal{F}|^2)$  \vspace{0.15cm}\\
		MaxVol     & & $\mathcal{O}(N |\mathcal{S}||\mathcal{F}|^3)$  \vspace{0.15cm}\\
		MaxCov     & &$\mathcal{O}(P |\mathcal{S}|)$ \\
		\bottomrule
	\end{tabular}
\end{table}

However, the subsequent sampling set selection varies significantly across methods. MaxCut, MaxVol, and MinTrac require repeated eigenvalue or matrix inversion operations on dense matrices whose size grows with the sampling set, while MaxSigMin, MinPinv, and MaxSig iteratively evaluate singular values or pseudo-inverses of submatrices. In contrast, MaxCov exploits the sparsity and localization of the Hermitian operator $\mathbf{T}^{\alpha}$, allowing greedy selection using simple vector operations over only $P$ nonzero entries per iteration. This explains the observed efficiency of MaxCov, which achieves fast runtime while maintaining competitive reconstruction performance compared to other GFRFT sampling strategies.

Precise localization in both the vertex and graph fractional spectral domains allows signal energy to be concentrated on specific nodes. Maximizing this energy over the sampling set $\mathcal{S}$ ensures that key local structures and dynamic patterns are effectively captured. Across the entire vertex set $\mathcal{V}$, a carefully selected $\mathcal{S}$ preserves the essential characteristics of the graph signal. This sampling strategy exhibits submodular behavior, with diminishing marginal gains as the set grows, making the use of a greedy algorithm both natural and effective \cite{Greedy}.

Several criteria, such as MinTrac, MinPinv, and MaxSig, involve the maximization or minimization of the trace of certain matrices. Due to their shared structural properties, these criteria can be addressed within a unified algorithmic framework, as summarized in Algorithm~\ref{alg01}.

\begin{algorithm}[h!]
	\footnotesize
	\caption{Unified Greedy Sampling Algorithm for GFRFT-Based Methods}
	\label{alg01}
	\begin{algorithmic}
		\STATE \textbf{Input:} Matrices $\mathbf{F}^{-\alpha}_{\mathcal{V}\mathcal{F}}$, $\mathbf{L}^{\alpha}$, $\mathbf{T}^{\alpha}$, number of samples $|\mathcal{S}|$
		\STATE \textbf{Output:} Sampling set $\mathcal{S}$
		\STATE Initialize $\mathcal{S}\leftarrow\emptyset$
		\FOR{$m=1:|\mathcal{S}|$}
		\FOR{$y=1:N$ and $y\notin\mathcal{S}$}
		\STATE Evaluate the objective according to the chosen criterion:
		\begin{itemize}
			\item[] $\bullet$ MaxCut: Eq.~\eqref{y0}
			\item[] $\bullet$ MaxSigMin: Eq.~\eqref{y1}
			\item[] $\bullet$ MinTrac: Eq.~\eqref{y2}
			\item[] $\bullet$ MinPinv: Eq.~\eqref{y3}
			\item[] $\bullet$ MaxSig: Eq.~\eqref{y4}
			\item[] $\bullet$ MaxVol: Eq.~\eqref{y5}
			\item[] $\bullet$ MaxCov: Eq.~\eqref{yopt}
		\end{itemize}
		\ENDFOR
		\STATE Select $y^{\mathrm{opt}}$ according to the optimality rule of the chosen equation
		\STATE Update $\mathcal{S}\leftarrow \mathcal{S}\cup y^{\mathrm{opt}}$
		\ENDFOR
		\RETURN $\mathcal{S}$
	\end{algorithmic}
\end{algorithm}

\section{Experiments}
\label{Experiments}
This section presents extensive numerical experiments on both synthetic and real-world datasets to evaluate the performance of the proposed GFRFT-based sampling strategies. Specifically, we first conduct comparative studies on randomly generated graphs and the real graph, benchmarking six representative sampling set selection methods against random sampling. Furthermore, on selected graphs, we investigate the performance of fast sampling schemes to assess the trade-off between computational efficiency and reconstruction accuracy. Finally, to examine the practical impact of GFRFT-based sampling, we apply the proposed methods to active semi-supervised learning of graph signals and to EEG signal representation tasks. Unless otherwise stated, the GFRFT is constructed using the Laplacian matrix, and all experiments are implemented within the GSP Toolbox framework \cite{GSPBOX}.

\subsection{Comparison of GFRFT Sampling Methods}
To evaluate the effectiveness of the proposed sampling strategies under the GFRFT framework, numerical experiments are conducted on two representative random graph models, namely a sensor graph and an Erd\H{o}s-R\'enyi graph. Both graphs consist of $N=200$ nodes, and the Erd\H{o}s-R\'enyi graph is generated with an edge probability of $0.05$. For each graph, the GFRFT is constructed with a fixed fractional order $\alpha = 0.7$. The bandwidth is set to $|\mathcal{F}| = 40$, based on which the GFRFT bandlimited operator $\mathbf{B}^{\alpha}$ is defined. The graph signal is then generated as $\bm{f} = \mathbf{B}^{\alpha} \,[\bm{1}_{N/2},-\bm{1}_{N/2}]^{\top}$, yielding a noise-free graph signal that is graph fractional bandlimited. Examples of the constructed signals on the two graph models are illustrated in Fig.~\ref{Randomsignal}.
\begin{figure}[h!]
	\centering
	\includegraphics[width=0.9\linewidth]{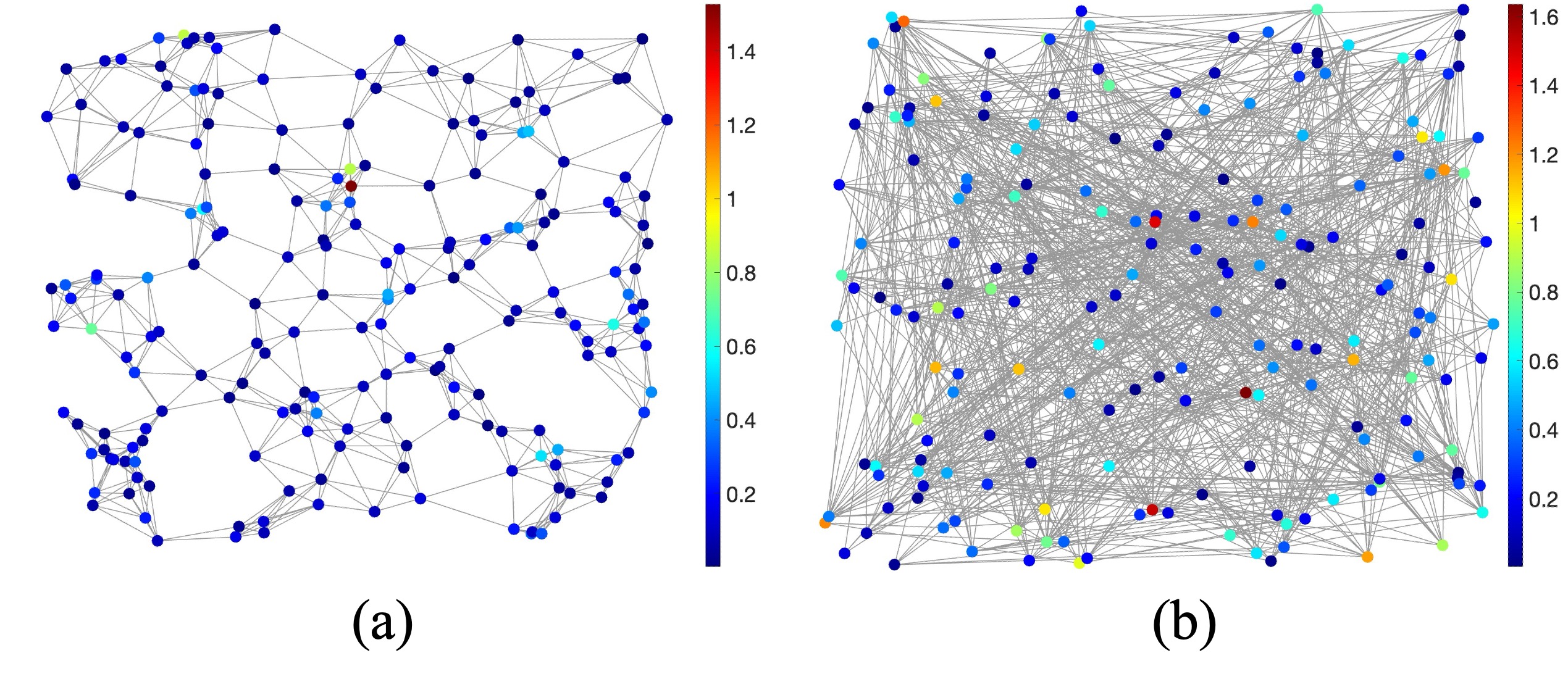}
	\vspace*{-15pt}
	\caption{200-node random graph signals: (a) sensor graph; (b) Erd\H{o}s--R\'enyi graph.}
	\label{Randomsignal}
\end{figure}
Seven sampling strategies are considered for comparison, including six greedy methods and one random sampling baseline. In these experiments, the fractional order is fixed at $\alpha = 0.7$, and the MaxCut method adopts the parameter $k=6$ based on prior empirical observations. For each sampling strategy, the sampling set $\mathcal{S}$ is selected with its cardinality varying from 0 to 200. Given a selected sampling set, the original graph signal is reconstructed using a GFRFT-based reconstruction operator. The reconstruction performance is evaluated in terms of the mean squared error (MSE) between the original signal and the reconstructed signal. The reconstruction results on the sensor graph and the Erd\H{o}s--R\'enyi graph are reported in Fig.~\ref{Randomresult}.
\begin{figure}[h!]
	\centering
	\includegraphics[width=0.9\linewidth]{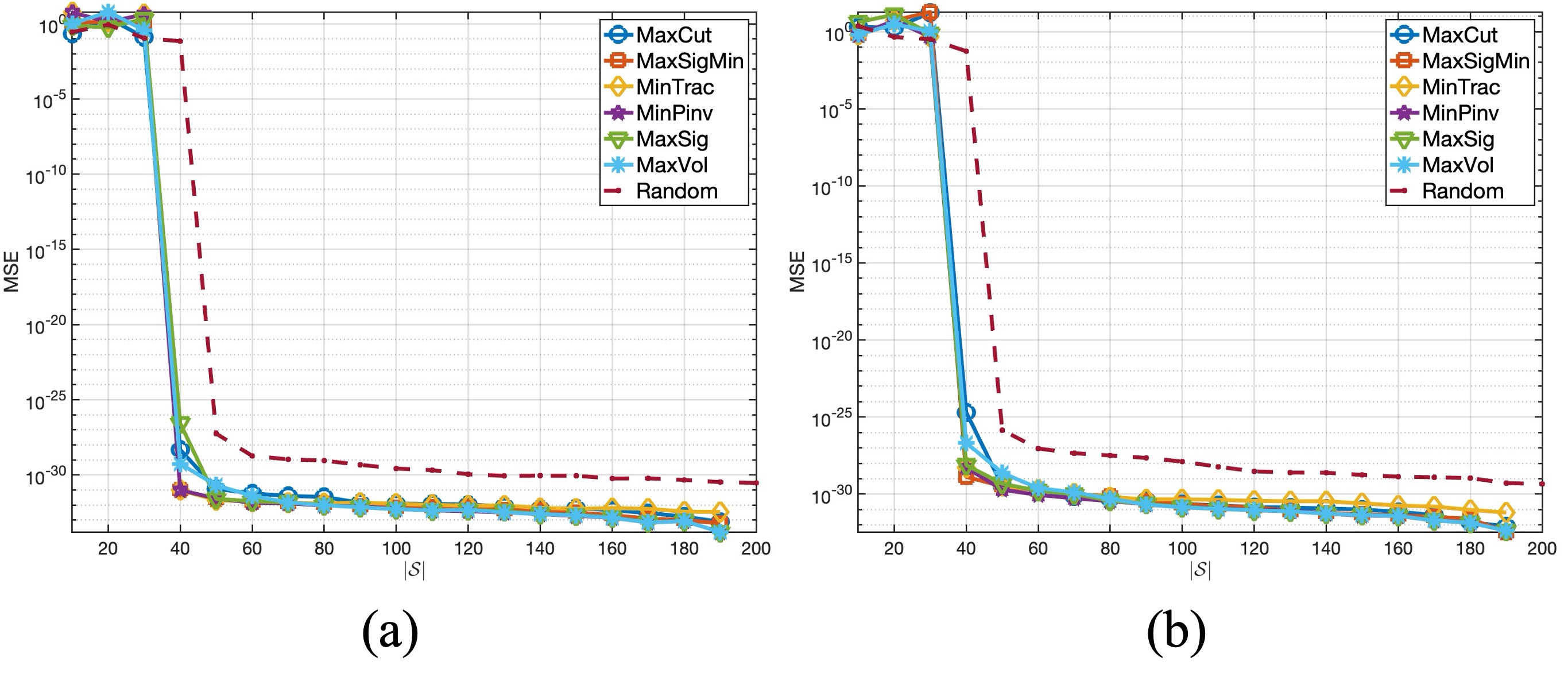}
	\vspace*{-15pt}
	\caption{MSE versus number of samples for different sampling strategies on (a) random sensor graph and (b) Erd\H{o}s--R\'enyi graph.}
	\label{Randomresult}
\end{figure}

As expected, for all sampling methods, the reconstruction error decreases monotonically as the number of sampled nodes increases once $|\mathcal{S}| \geq |\mathcal{F}|$, which confirms the validity of the proposed GFRFT-based sampling and reconstruction framework. Moreover, the greedy sampling strategies consistently achieve significantly lower reconstruction errors than random sampling. 
To further investigate the influence of the fractional order, the sampling size is fixed at $|\mathcal{S}| = 40$, and the reconstruction signal-to-noise ratio (SNR) is evaluated for different values of $\alpha$. The corresponding results for all sampling methods are summarized in Table~\ref{tab:alpha_snr}. 
\begin{table*}[t]
	\centering
	\tiny
	\caption{SNR (dB) comparison of different GFRFT-based sampling methods under varying fractional orders $\alpha$}
	\label{tab:alpha_snr}
	\setlength{\tabcolsep}{5pt}
	\renewcommand{\arraystretch}{1.15}
	\begin{tabular}{c|rrrrrrrrrrrrr}
		\hline
		\hline
		& 0.40 & 0.45 & 0.50 & 0.55 & 0.60 & 0.65 & 0.70 & 0.75 & 0.80 & 0.85 & 0.90 & 0.95 & 1.00 \\
		\hline
		\multicolumn{14}{c}{Random sensor graph} \\
		\hline
		MaxCut
		& 3.62 & -16.57 & -11.90 & -11.01& 6.66 & 13.08 & \textbf{270.49} & 10.77 & 2.63 & -2.63 & -7.92 & -15.13 & -55.06 \\
		
		MaxSigMin
		& 3.62 & 4.41 & 5.48 & 6.50 & 11.13 & 17.03 & \textbf{297.29} & 18.28 & 12.76 & 9.70 & 7.62 & 6.05 & 4.78 \\
		
		MinTrac
		& 3.62 & 4.41 & 5.48 & 6.25 & 9.97 & 17.74 & \textbf{297.34} & 18.25 & 12.76 & 9.70 & 7.62 & 6.05 & 4.78 \\
		
		MinPinv
		& 3.62 & 4.41 & 5.48 & 6.25 & 9.97 & 17.74 & \textbf{297.34} & 18.25 & 12.76 & 9.70 & 7.62 & 6.05 & 4.78 \\
		
		MaxSig
		& 3.62 & 4.41 & 5.48 & -20.15 & -8.78 & -6.87 & \textbf{252.95} & -8.92 & -10.29 & -11.83 & -20.60 & -28.10 & -130.55 \\
		
		MaxVol
		& -1.02 & 3.21 & 2.37 & 6.14 & 9.48 & 11.36 & \textbf{279.83} & 9.28 & -1.33 & -3.46 & -14.83 & -22.62 & -137.39 \\
		
		Random
		& -13.65 & -22.01 & -3.55 & -2.49 & -2.38 & -10.12 & \textbf{-1.18} & -1.38 & -7.06 & -5.16 & -18.70 & -33.21 & -111.71 \\
		
		\hline
		\multicolumn{14}{c}{Erd\H{o}s--R\'enyi graph} \\
		\hline
		MaxCut
		& 4.79 & 5.53 & 6.49 & 4.34 & -3.03 & 9.15 & \textbf{241.09} & 5.64 & 14.18 & 10.98 & 6.66 & 5.31 & 4.26 \\
		
		MaxSigMin
		& 4.79 & 5.53 & 6.49 & 7.85 & 10.27 & 19.30 & \textbf{282.95}& 19.72 & 13.36 & 11.00 & 8.92 & 6.97 & 5.49 \\
		
		MinTrac
		& 4.79 & 5.53 & 6.49 & 6.94 & 12.79 & 19.12 & \textbf{276.95} & 20.03 & 14.30 & 11.15 & 8.75 & 6.97 & 5.49 \\
		
		MinPinv
		& 4.79 & 5.53 & 6.49 & 6.94 & 12.79 & 19.12 & \textbf{276.95} & 20.03 & 14.30 & 11.15 & 8.75 & 6.97 &  5.49 \\
		
		MaxSig
		& 4.79 & 5.53 & 6.49 & -4.66& -3.25& 6.17 & \textbf{275.30} & 19.69 & 13.91 & 10.57 & 8.41 & 6.63 & 5.17 \\
		
		MaxVol
		& 1.64 & 3.82 & 2.94 & 2.09 & 6.21 & 5.74 & \textbf{260.84} & 4.36 & -4.22 & -3.44 & -8.80 & -16.63 &  -19.65 \\
		
		Random
		& -32.04 & -13.47 & -5.87 & -13.76 & -7.92 & -2.25 & \textbf{6.84} & -17.16 & -15.27 & -16.39 & -27.54 & -25.68 & -65.12 \\
		
		\hline
		\hline
	\end{tabular}
\end{table*}
It is observed that the best reconstruction performance is achieved when $\alpha = 0.7$ across all methods. Notably, when $\alpha = 1$, the GFRFT reduces to the GFT, and the corresponding sampling strategies coincide with GFT-based sampling. This comparison demonstrates that GFRFT-based sampling provides greater flexibility and improved robustness compared with classical GFT-based approaches. Motivated by these observations, the next subsection investigates both the computational efficiency and the reconstruction accuracy of classical sampling methods and fast sampling schemes.

\subsection{Comparison of GFRFT Sampling Set Selection}
\begin{figure*}[h!]
	\centering
	\includegraphics[width=\linewidth]{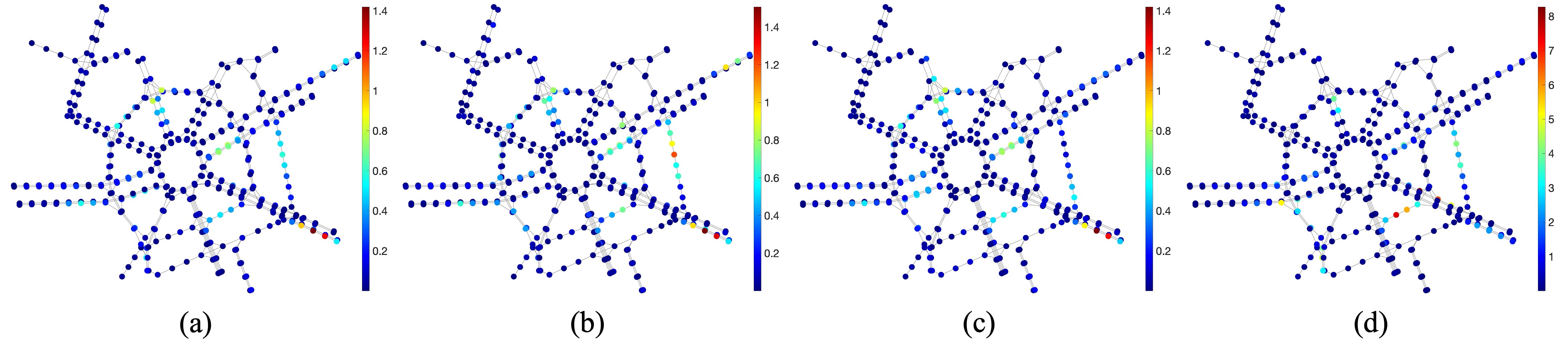}
	\vspace*{-15pt}
	\caption{Reconstruction results of the Rome vehicular traffic data: (a) original signal; (b) MaxSigMin; (c) MaxCov; (d) random sampling.}
	\label{romerecovery}
\end{figure*}
We further investigate the sampling set selection performance of different GFRFT-based methods, with a particular focus on computational efficiency and reconstruction accuracy. In addition to six classical greedy sampling strategies, a fast sampling set selection scheme, namely MaxCov, is included for comparison. For a fixed fractional order $\alpha = 0.9$, bandwidth $|\mathcal{F}| = 20$, and sampling $|\mathcal{S}| = 60$, a GFRFT bandlimited graph signal is constructed from real-world vehicular traffic data $\bm{x}$ defined on the Rome graph\footnote{https://colab.research.google.com/drive/1afJYz0iMJtKnTWUJ0goZi85x3YfJxGvd}. Additive Gaussian noise is introduced to simulate practical measurement perturbations. Specifically, the observed signal is given by $\bm{f} = \mathbf{B}^{\alpha}\,\bm{x} + \mathbf{F}^{-\alpha}\bm{\xi}$, where $\bm{\xi}$ is an i.i.d. noise vector following the distribution $\mathcal{N}(0, 5\times10^{-3})$. For each sampling strategy, a sampling set is first selected, after which signal reconstruction is performed using the corresponding GFRFT reconstruction operator.

The reconstruction results are illustrated in Fig.~\ref{romerecovery}. For the original signal shown in panel (a), among the six classical sampling strategies, the MaxSigMin achieves the best performance with an MSE of 0.04, as shown in (b). In contrast, the proposed MaxCov method further reduces the reconstruction error to 0.01, as depicted in (c), while random sampling yields a significantly larger error of 1.38, as shown in (d). Both quantitative and visual results demonstrate that the fast MaxCov algorithm achieves superior reconstruction performance. 
In addition, the distributions of the selected sampling nodes are presented in Fig.~\ref{romenodes}. It can be observed that MaxCut, MaxSigMin, MinTrac, MinPinv, MaxVol, and the fast MaxCov method all tend to select sampling nodes that are well distributed over the graph, thereby ensuring broad spatial coverage. Notably, MinPinv and MaxSig produce highly similar sampling patterns, which reflects their optimization objectives related to the minimization or maximization of singular values. Random sampling is included solely as a baseline for comparison.
\begin{figure*}[h!]
	\centering
	\includegraphics[width=\linewidth]{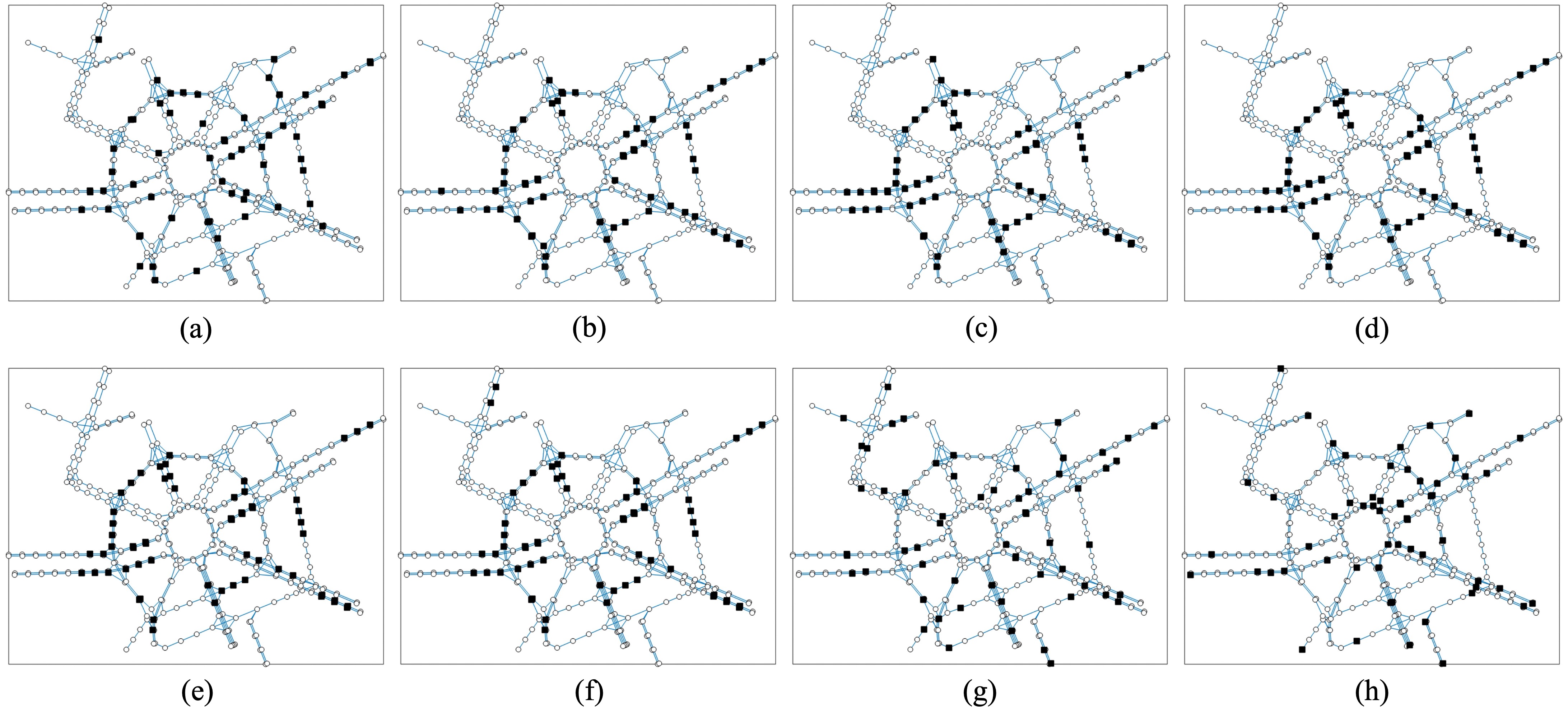}
	\vspace*{-15pt}
	\caption{Locations of the sampled nodes: (a) MaxCut; (b) MaxSigMin; (c) MinTrac; (d) MinPinv; (e) MaxSig; (f) MaxVol; (g) MaxCov; (h) random sampling, shown in black.}
	\label{romenodes}
\end{figure*}

\subsection{Comparison of GFRFT Reconstruction Time}
To evaluate the computational scalability of different sampling strategies, we compare their reconstruction time as a function of the graph size. The experiments are conducted on community graphs with the number of vertices ranging from $200$ to $2000$. For each graph, the sampling size is set to $|\mathcal{S}| = N/10$, while $\alpha = 0.5$ and $|\mathcal{F}| = 20$. A bandlimited graph signal is generated in the GFRFT domain as $\bm{f} = \mathbf{F}^{-\alpha} \left( \bm{z}_{\mathcal{F}} + \bm{\xi} \right)$, where $\bm{z}_{\mathcal{F}} \in \mathbb{R}^{N}$ is a random vector supported on the fractional frequency set $\mathcal{F}$, whose entries indexed by $\mathcal{F}$ are drawn from $\mathcal{N}(0, 0.1)$ and are zero elsewhere, and
$\bm{\xi}$ following $\mathcal{N}(0, 0.01)$. All sampling strategies are applied to the same signal realization for a fair comparison. The reported runtime corresponds to the sampling set selection stage, which dominates the overall computational cost in practical scenarios. Signal reconstruction is then performed using the corresponding GFRFT-based operators to verify the effectiveness of the selected sampling sets.

The runtime comparison is shown in Fig.~\ref{time}. As expected, the computational cost of all greedy sampling methods increases with the graph size. In particular, strategies such as MaxCut, MinTrac, and MaxSig exhibit a rapid growth in runtime as $N$ increases, reflecting the high complexity induced by iterative updates and repeated matrix operations. In contrast, excluding the random baseline, the MaxCov method consistently achieves significantly lower runtime across all graph sizes and demonstrates a much more moderate growth trend with respect to $N$. 
\begin{figure}[h!]
	\centering
	\includegraphics[width=0.7\linewidth]{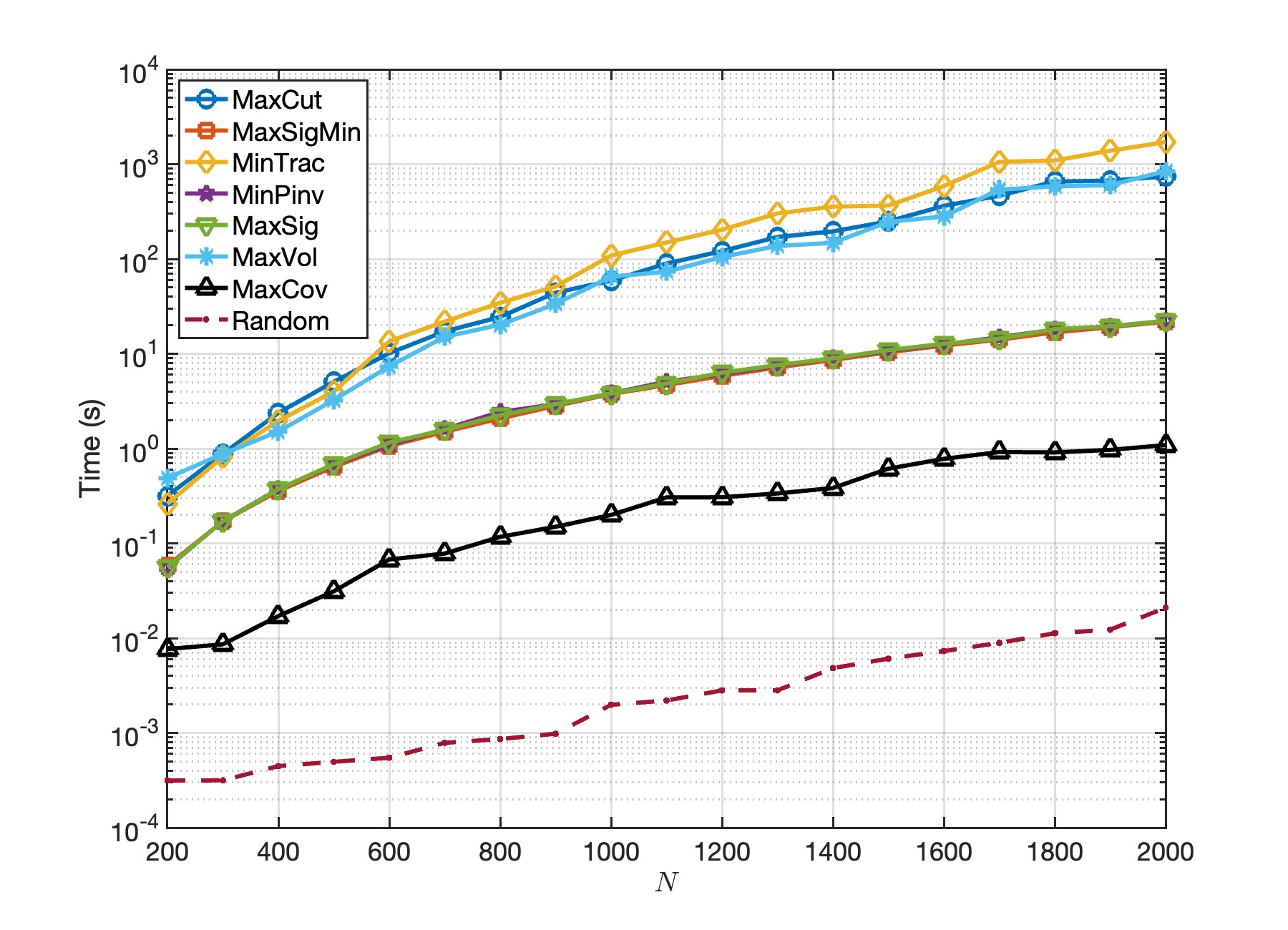}
	\vspace*{-15pt}
	\caption{Runtime comparison of different GFRFT-based sampling strategies as a function of the graph size $N$.}
	\label{time}
\end{figure}
To ensure that the observed computational gains are not achieved at the expense of reconstruction accuracy, Table~\ref{mse} reports the corresponding MSE for methods. 
\begin{table}[h!]
	\footnotesize
	\centering
	\caption{SNR (dB) comparison of different GFRFT-based sampling methods under varying the vertex size $N$}
	\label{mse}
	\renewcommand{\arraystretch}{1.2}
	\setlength{\tabcolsep}{4pt}
	\begin{tabular}{c |r r r r r r }
		\hline
		& 500 & 800 & 1100 & 1400 & 1700 & 2000 \\
		\hline
		MaxCut     & -3.40 & -0.60 & -3.80 & -1.07 & -0.70 & -1.87 \\
		MaxSigMin  & -21.44 & -29.97 & -19.85 & -26.58 & -33.31 & -33.43 \\
		MinTrac   & -7.72 & -13.91 & -28.07 & -23.29 & -23.44 & -32.30 \\
		MinPinv   & -1.92 & -29.17 & -33.80 & -11.76 & -33.64 & -30.70 \\
		MaxSig    & -30.30 & -21.79 & -15.18 & -19.89 & -37.61 & -29.88 \\
		MaxVol    & -15.58 & -25.89 & -40.30 & -31.25 & -25.40 & -25.70 \\
		MaxCov    & \textbf{0.75} & \textbf{0.81} & \textbf{0.56} & \textbf{0.98} & \textbf{0.56} & \textbf{0.70} \\
		Random    & -314.36 & -314.32 & -304.35 & -301.05 & -326.75 & -333.01 \\
		\hline
	\end{tabular}
\end{table}
The results show that MaxCov not only delivers stable and competitive reconstruction performance compared to classical greedy strategies, but also consistently outperforms random sampling. These findings indicate that, although traditional greedy methods remain effective in terms of reconstruction quality, their high computational cost significantly limits their scalability. By contrast, MaxCov achieves a favorable trade-off between computational efficiency and reconstruction accuracy, making it particularly well suited for large-scale GFRFT-based graph signal processing applications.

\subsection{GFRFT Sampling of Non-Bandlimited Sea Clutter Signals}
To further evaluate the performance of the proposed GFRFT-based sampling strategies on real-world data, experiments are conducted on measured sea clutter signals\footnote{http://soma.mcmaster.ca/ipix/dartmouth/cdf001\_050.html.}. 
The raw in-phase and quadrature radar measurements are first extracted, followed by data correction to eliminate possible overflow effects. 
An automatic preprocessing and normalization procedure is then applied to compensate for amplitude imbalance and normalize the radar returns.

In the experiments, a signal of dimension $N=200$ corresponding to a fixed range bin is considered. 
A graph is constructed using a Gaussian kernel-based method with parameter $\sigma=1$ and edge connection probability set to $0.05$, where each vertex represents a time sample and the edge weights are determined by a similarity kernel applied to the radar signal snapshots.

Based on the constructed graph, the GFRFT is applied with an effective bandwidth $|\mathcal{F}|=20$. 
All GFRFT-based sampling strategies are investigated. 
For each strategy, the fractional order $\alpha$ is first coarsely explored and then refined around promising values. 
With a fixed sampling number $|\mathcal{S}|=50$, the corresponding sampling set is selected and the original non-bandlimited sea clutter signal is reconstructed using the proposed fractional reconstruction framework. 
For each method, the optimal fractional order minimizing the reconstruction error is identified.

Table~\ref{snr_alpha} reports the corresponding maximum SNR values for quantitative comparison.  Among all considered strategies, MaxCov method consistently achieves the best reconstruction performance and attains the highest SNR, with its optimal value obtained at $\alpha = 0.9$.
\begin{table}[t]
	\centering
	\footnotesize
	\caption{Optimal $\alpha$ and corresponding maximum SNR for different GFRFT-based sampling methods.}
	\label{snr_alpha}
	\begin{tabular}{l c r}
		\toprule
		Method & Best $\alpha$ & Maximum SNR (dB) \\
		\midrule
		MaxCut    & 0.10 & 0.25 \\
		MaxSigMin & 0.80 & -20.58 \\
		MinTrac   & 1.30 & -11.30 \\
		MinPinv   & 0.60 & -12.57 \\
		MaxSig    & 1.50 & -11.85 \\
		MaxVol    & 0.70 & -10.34 \\
		MaxCov    & 0.90 & \textbf{1.01} \\
		\bottomrule
	\end{tabular}
\end{table}

To further assess the statistical reconstruction behavior, the empirical cumulative distribution functions (CDF) of the reconstructed signal magnitudes are analyzed and compared with that of the original sea clutter signal.
As shown in Fig.~\ref{CDF}, the proposed GFRFT-based sampling methods, particularly MaxCov, closely match the distribution of the original signal, demonstrating their ability to preserve both global and local statistical characteristics of real-world sea clutter data.
\begin{figure}[h!]
	\centering
	\includegraphics[width=0.7\linewidth]{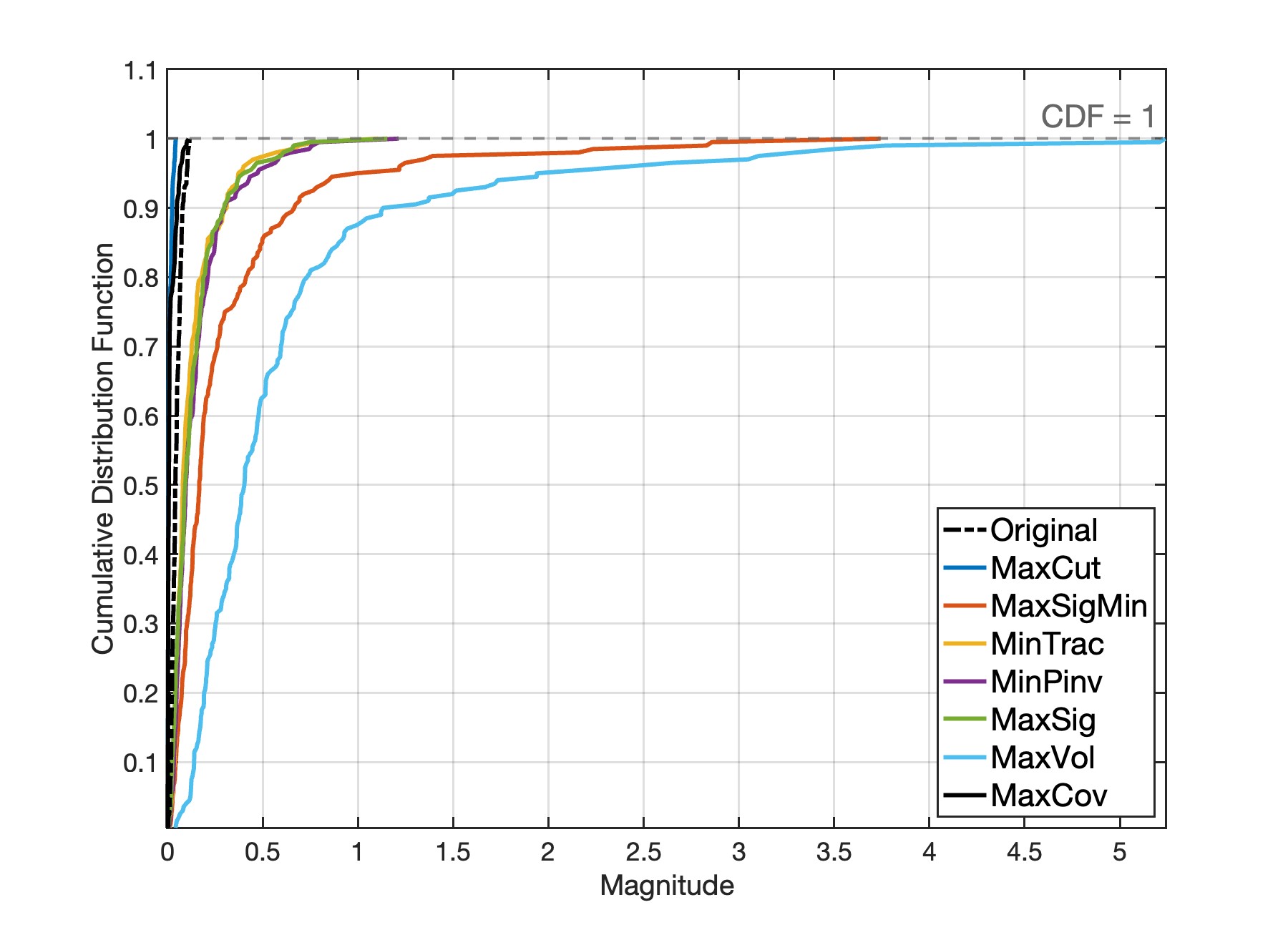}
	\vspace*{-15pt}
	\caption{Comparison of statistical characteristics of various methods.}
	\label{CDF}
\end{figure}

\section{Conclusion}
\label{Conclusion}
This paper presents a unified and efficient GFRFT sampling framework that incorporates the definition of bandlimited graph signals, the associated sampling theorem, perfect reconstruction conditions, and localization operators. Sampling strategies are developed under criteria including maximum cutoff frequency, minimum reconstruction error, and maximum localized basis, with explicit representations of their localization operators. Leveraging these operators, a computationally efficient sampling set selection method is introduced. Numerical simulations and application experiments demonstrate the effectiveness and advantages of the proposed strategies in reconstruction accuracy and computational efficiency. Future work will explore extensions to complex-valued graph signals, including applications in quantum networks and other radar signal processing.

\appendix
\section{Proof of Theorem \ref{thm1}}
\label{AA}

First, assume that $\bm{x}$ is perfectly localized in both the vertex and fractional spectral domains. Then $\mathbf{B}^{\alpha}\bm{x} = \bm{x}$, and $\mathbf{D}\bm{x} = \bm{x}$.
Applying these identities repeatedly gives
\[
\mathbf{B}^{\alpha}\mathbf{D}\mathbf{B}^{\alpha}\bm{x} 
= \mathbf{B}^{\alpha}\mathbf{D}\bm{x} 
= \mathbf{B}^{\alpha}\bm{x} 
= \bm{x},
\]
so that $\bm{x}$ is an eigenvector of $\mathbf{B}^{\alpha}\mathbf{D}$ with eigenvalue $1$.  
Since $\mathbf{B}^{\alpha}$ and $\mathbf{D}$ are orthogonal projectors, $\|\mathbf{B}^{\alpha}\mathbf{D}\|_2 \le 1$, hence $\|\mathbf{B}^{\alpha}\mathbf{D}\|_2 = 1$. Noting that $(\mathbf{B}^{\alpha}\mathbf{D})^{\mathrm{H}} = \mathbf{D}\mathbf{B}^{\alpha}$, and a matrix shares singular values with its Hermitian conjugate, we have
\[
\|\mathbf{D}\mathbf{B}^{\alpha}\|_2 = \|\mathbf{B}^{\alpha}\mathbf{D}\|_2 = 1.
\]

Conversely, assume $\|\mathbf{B}^{\alpha}\mathbf{D}\|_2 = 1$. Then there exists $\bm{x} \neq 0$ such that 
\[
\|\mathbf{B}^{\alpha}\mathbf{D}\bm{x}\|_2 = \|\bm{x}\|_2.
\]
Equality for orthogonal projectors holds only if $\mathbf{B}^{\alpha}\mathbf{D}\bm{x} = \bm{x}$. Left-multiplying by $\mathbf{B}^{\alpha}$ and using $(\mathbf{B}^{\alpha})^2 = \mathbf{B}^{\alpha}$ yields $\mathbf{B}^{\alpha}\mathbf{D}\bm{x}
= \mathbf{B}^{\alpha}\bm{x}$, combining two relations leads to $\mathbf{B}^{\alpha}\bm{x} = \bm{x}$. Substituting back gives $\mathbf{D}\bm{x} = \bm{x}$, proving perfect localization in both domains.

Finally, since $(\mathbf{B}^{\alpha}\mathbf{D})^{\mathrm{H}} = \mathbf{D}\mathbf{B}^{\alpha}$, and thus $\|\mathbf{B}^{\alpha}\mathbf{D}\|_2=1$ is equivalent to $\|\mathbf{D}\mathbf{B}^{\alpha}\|_2=1$.
This completes the proof.

\section{Proof of Theorem \ref{thm2}}
\label{AB}

Let $\mathbf{D}=\mathbf{I}-\overline{\mathbf{D}}$ and $\bm{x}\in\mathcal{B}^{\alpha}$, so that $\mathbf{B}^{\alpha}\bm{x}=\bm{x}$.  
For a reconstruction operator $\mathbf{R}$, the reconstruction error is
\[
\bm{x}-\mathbf{R}\mathbf{D}\bm{x}
= \bm{x}-\mathbf{R}(\mathbf{I}-\overline{\mathbf{D}})\bm{x}
= \bm{x}-\mathbf{R}(\mathbf{I}-\overline{\mathbf{D}}\mathbf{B}^{\alpha})\bm{x}. 
\]
Hence any $\mathbf{R}$ satisfying $\mathbf{R}(\mathbf{I}-\overline{\mathbf{D}}\mathbf{B}^{\alpha})=\mathbf{B}^{\alpha}$ ensures $\mathbf{R}\mathbf{D}\bm{x}=\bm{x}$ for all $\bm{x}\in\mathcal{B}^{\alpha}$. If $\|\mathbf{B}^{\alpha}\overline{\mathbf{D}}\|_2<1$, then $\mathbf{I}-\overline{\mathbf{D}}\mathbf{B}^{\alpha}$ is invertible via the Neumann series $\sum_{k=0}^{\infty}(\overline{\mathbf{D}}\mathbf{B}^{\alpha})^k$, and one can choose
\[
\mathbf{R} = \mathbf{B}^{\alpha} (\mathbf{I}-\overline{\mathbf{D}}\mathbf{B}^{\alpha})^{-1},
\]
which guarantees $\mathbf{R}\mathbf{D}\bm{x}=\bm{x}$ for all $\bm{x}\in\mathcal{B}^{\alpha}$.

Conversely, if $\|\mathbf{B}^{\alpha}\overline{\mathbf{D}}\|_2=1$, there exists $\bm{v}\neq 0$ with $\|\mathbf{B}^{\alpha}\overline{\mathbf{D}}\bm{v}\|_2=\|\bm{v}\|_2$. Let $\bm{w}=\mathbf{B}^{\alpha}\bm{v}\in\mathcal{B}^{\alpha}$. Then $\mathbf{B}^{\alpha}\overline{\mathbf{D}}\bm{w}=\bm{w}$, but $\overline{\mathbf{D}}$ annihilates components on $\mathcal{S}$, so $\bm{w}$ is supported entirely on $\overline{\mathcal{S}}$ and $\mathbf{D}\bm{w}=\mathbf{0}$. Hence sampling yields zero and no reconstruction operator can recover $\bm{w}\neq 0$, making perfect recovery for all $\bm{x}\in\mathcal{B}^{\alpha}$ impossible.

\section{Proof of Theorem \ref{thm3}}
\label{AC}
Thus the reconstruction formula $\bm{x}_{\mathcal{R}}
= \mathbf{T}^{\alpha}_{\mathcal{V}\mathcal{S}} \left( \mathbf{T}^{\alpha}_{\mathcal{S}} \right)^{\dag} \bm{x}_{\mathcal{S}}$, and substitute 
\(
\mathbf{T}^{\alpha}= \mathbf{F}^{-\alpha} h(\mathbf{\Delta}^{\alpha}) \mathbf{F}^{\alpha}.
\)
This gives
\[
\begin{aligned}
	\bm{x}_{\mathcal{R}}
	=& \left( \mathbf{F}^{-\alpha} h\left( \mathbf{\Delta}^{\alpha} \right) \mathbf{F}^{\alpha} \right)_{\mathcal{V}\mathcal{S}} \left( \left( \mathbf{F}^{-\alpha} h\left( \mathbf{\Delta}^{\alpha} \right) \mathbf{F}^{\alpha} \right)_{\mathcal{S}} \right)^{\dag} \bm{x}_{\mathcal{S}} \\
	=& \mathbf{F}^{-\alpha} h\left( \mathbf{\Delta}^{\alpha} \right) \mathbf{F}^{\alpha}_{\mathcal{S} \mathcal{V}} \left( \mathbf{F}^{-\alpha}_{\mathcal{S} \mathcal{V}} h\left( \mathbf{\Delta}^{\alpha} \right) \mathbf{F}^{\alpha}_{\mathcal{S} \mathcal{V}} \right)^{\dag} \bm{x}_{\mathcal{S}} \\
	=& \mathbf{F}^{-\alpha} h^{1/2} \left( \mathbf{\Delta}^{\alpha} \right) \left( \mathbf{F}^{-\alpha}_{\mathcal{S} \mathcal{V}} h^{1/2} \left( \mathbf{\Delta}^{\alpha} \right) \right)^{\dag} \bm{x}_{\mathcal{S}}.
\end{aligned}
\]

To normalize the filter response, rewrite the expression as
\begin{equation}
	\bm{x}_{\mathcal{R}}
	= \mathbf{F}^{-\alpha}
	\left( \frac{h(\mathbf{\Delta}^{\alpha})}{\rho} \right)^{1/2}
	\left(
	\mathbf{F}^{-\alpha}_{\mathcal{S}\mathcal{V}}
	\left( \frac{h(\mathbf{\Delta}^{\alpha})}{\rho} \right)^{1/2}
	\right)^{\dag}
	\bm{x}_{\mathcal{S}},
	\label{xR}
\end{equation}
where 
\(
\rho = \min_{0 \le i \le |\mathcal{F}|-1} [h(\mathbf{\Delta}^{\alpha})]_{ii}.
\)
For \( i \ge |\mathcal{F}| \), the corresponding entries of  
\(
(h(\mathbf{\Delta}^{\alpha})/\rho)^{1/2}
\)
become negligible. Under this assumption,
\begin{equation}
	\left(
	\mathbf{F}^{-\alpha}_{\mathcal{S}\mathcal{V}}
	\left( \frac{h(\mathbf{\Delta}^{\alpha})}{\rho} \right)^{1/2}
	\right)^{\dag}
	\approx
	\left(
	\frac{h(\mathbf{\Delta}^{\alpha}_{\mathcal{F}})}{\rho}
	\right)^{-1/2}
	\left( \mathbf{F}^{-\alpha}_{\mathcal{S}\mathcal{F}} \right)^{\dag}.
	\label{approximation}
\end{equation}

Substituting \eqref{xR} into \eqref{approximation} yields
\[
\bm{x}_{\mathcal{R}} 
= \mathbf{F}^{-\alpha}_{\mathcal{V} \mathcal{F}} \left( \mathbf{F}^{-\alpha}_{\mathcal{S} \mathcal{F}} \right)^{\dag} \bm{x}_{\mathcal{S}} = \mathbf{D}_{\mathcal{S}} \mathbf{F}^{-\alpha}_{\mathcal{V}} \mathbf{\Sigma}_{\mathcal{F}} \mathbf{F}^{\alpha}_{\mathcal{V}} \bm{x}_{\mathcal{S}}= \mathbf{R} \bm{x}_{\mathcal{S}},
\]
which completes the proof.

\section*{Declaration of competing interest}
The authors declare that they have no known competing financial interests or
personal relationships that could have appeared to influence the work reported
in this paper.

\section*{Acknowledgments}
This work were supported by the Natural Science Foundation of Beijing Municipality [No. 4242011], and the National Natural Science Foundation of China [No. 62571042].


\

\

\


\end{document}